\documentclass[reqno,centertags, 12pt]{amsart}
\usepackage{amsmath,amsthm,amscd,amssymb}
\usepackage{latexsym}
\usepackage{graphicx}
\sloppy

\newcommand{\bbC}{{\mathbb{C}}}
\newcommand{\bbN}{{\mathbb{N}}}


\newcommand{\lb}{\label}

\newcommand{\bi}{\bibitem}

\newcommand{\beq}{\begin{equation}}
\newcommand{\eeq}{\end{equation}}
\newcommand{\ba}{\begin{align}}
\newcommand{\ea}{\end{align}}

\newcounter{smalllist}

%

%





\allowdisplaybreaks
\numberwithin{equation}{section}

\newtheorem{theorem}{Theorem}[section]

\newtheorem*{p2.1}{Proposition 2.1}
\newtheorem{proposition}[theorem]{Proposition}

\newtheorem{corollary}[theorem]{Corollary}
\theoremstyle{definition}

\newtheorem*{tA}{Theorem A}
\newtheorem*{tB}{Theorem B}

\theoremstyle{remark}
\newtheorem*{remark}{Remark}

\theoremstyle{definition}

\newtheorem*{acknowledgement}{Acknowledgment}


\begin{document}
\title[A GENERALIZATION OF MACMAHON'S FORMULA]{A GENERALIZATION OF MACMAHON'S FORMULA}
\author[M.~Vuleti\'{c}]{Mirjana Vuleti\'{c}$^{*}$}

\thanks{$^*$ Mathematics 253-37, California Institute of Technology, Pasadena, CA 91125.
E-mail: vuletic@caltech.edu }

\begin{abstract}
We generalize the generating formula for plane partitions known as
MacMahon's formula as well as its analog for strict plane
partitions.
We give a 2-parameter generalization of these formulas related to Macdonald's symmetric functions. The formula is especially simple in the Hall-Littlewood case. We also give a bijective proof of the analog of MacMahon's formula for strict plane partitions.
\end{abstract}

\maketitle

\section{Introduction} \lb{s1}

A plane partition is a Young diagram filled with positive
integers that form nonincreasing rows and columns. Each plane
partition can be represented as a finite two sided sequence of
ordinary partitions
$(\dots,\lambda^{-1},\lambda^0,\lambda^1,\dots)$, where $\lambda^0$
corresponds to the ordinary partition on the main diagonal and
$\lambda^k$ corresponds to the diagonal shifted by $k$. A plane
partition whose all diagonal partitions are strict ordinary
partitions (i.e. partitions with all distinct parts) is called a
{\it strict} plane partition.  Figure \ref{PlanePartition} shows two standard
ways of representing a plane partition. Diagonal partitions are
marked on the figure on the left.

\begin{figure}[htp!]
\centering \includegraphics[height=5.8cm]{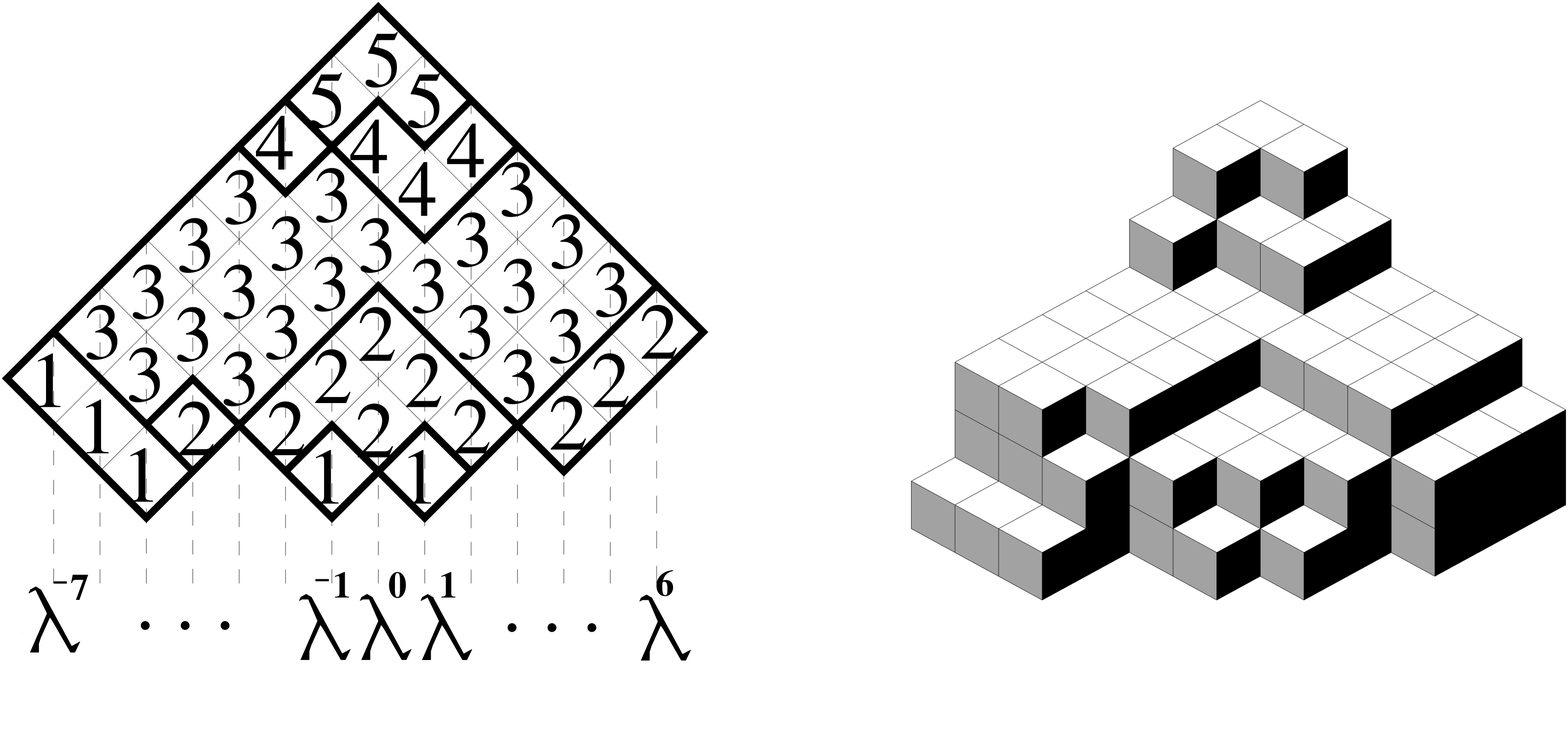}
\caption{A plane partition}
\label{PlanePartition} \end{figure}

For a plane partition $\pi$ one defines the weight $|\pi|$ to be the
sum of all entries. A {\it connected component} of a plane partition
is the set of all connected boxes of its Young diagram that are
filled with a same number. We denote the number of connected
components of $\pi$ with $k(\pi)$. For the example from Figure
\ref{PlanePartition}  we have $k(\pi)=10$ and its connected
components are shown in Figure \ref{PlanePartition} (left- bold
lines represent boundaries of these components, right- white
terraces are connected components).

Denote the set of all plane partitions with $\mathcal{P}$ and with
$\mathcal{P}(r,c)$ we denote those that have zero $(i,j)$th entry
for $i>r$ and $j>c$. Denote the set of all strict plane partitions
with $\mathcal{SP}$.

A generating function for plane partitions is given by the famous
MacMahon's formula (see e.g. 7.20.3 of \cite{S}):
\begin{equation}\lb{MacMahonac}
\sum_{\pi \in \mathcal{P}}s^{|\pi|}= \prod_{n=1}^{\infty}
\left(\frac{1}{1-s^{n}}\right)^n.
\end{equation}

Recently, a generating formula for the set of strict plane
partitions was found in \cite{FW} and \cite{V}:
\begin{equation}\lb{ShiftedMacMahonac}
\sum_{\pi \in \mathcal{SP}}2^{k(\pi)}s^{|\pi|}= \prod_{n=1}^{\infty}
\left(\frac{1+s^n}{1-s^{n}}\right)^n.
\end{equation}
We refer to it as  the shifted MacMahon's formula.


In this paper we generalize both formulas (\ref{MacMahonac}) and
(\ref{ShiftedMacMahonac}). Namely, we define a polynomial $A_\pi(t)$
that gives a generating formula for plane partitions of the form
$$
\sum_{\pi \in \mathcal{P}(r,c)}A_\pi(t)s^{|\pi|}=
\prod_{i=1}^{r}\prod_{j=1}^{c} \frac{1-ts^{i+j-1}}{1-s^{i+j-1}}
$$
with the property that $A_\pi(0)=1$ and
$$
A_\pi(-1)=
\begin{cases}
2^{k(\pi)},&\pi \text{ is a strict plane partition}, \\
0,& \text{otherwise}.
\end{cases}
$$
We further generalize this and find a rational function
$F_{\pi}(q,t)$ that satisfies
$$
\sum_{\pi \in
\mathcal{P}(r,c)}F_\pi(q,t)s^{|\pi|}=\prod_{i=1}^{r}\prod_{j=1}^c
\frac{(ts^{i+j-1};q)_\infty}{(s^{i+j-1};q)_{\infty}},
$$
where
$$
(s;q)_\infty=\prod_{n=0}^{\infty}(1-sq^n)
$$
and $F_\pi(0,t)=A_\pi(t)$. We describe $A_\pi(t)$ and $F_\pi(q,t)$
below.

In order to describe $A_\pi(t)$ we need more notation. If a box
$(i,j)$ belongs to a connected component $C$ then we define its {\it
level} $h(i,j)$  as the smallest positive integer such that
$(i+h,j+h)$ does not belong to $C$. A {\it border component} is
a connected subset of a connected component where all boxes have the
same level. We also say that this border component is of this level.
For the example above, border components and their levels are shown
in Figure \ref{BorderComponents}.

\begin{figure} [htp!]
\centering \includegraphics[height=6cm]{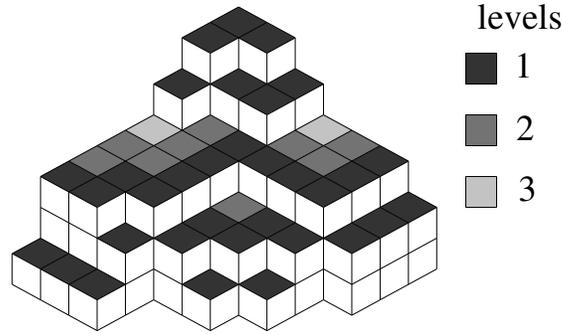}
\caption{Border Components}
\label{BorderComponents} \end{figure}

For each connected component $C$ we define a sequence
$(n_1,n_2,\dots)$ where $n_i$ is the number of $i$-level border
components of $C$. We set
$$
P_C(t)=\prod_{i\geq1}(1-t^{i})^{n_i}.
$$
Let $C_1,C_2,\dots,C_{k(\pi)}$ be connected components of $\pi$. We
define
$$
A_\pi(t)=\prod_{i=1}^{k(\pi)}P_{C_i}(t).
$$
For the example above $A_\pi(t)=(1-t)^{10}(1-t^2)^3(1-t^3)^2$.

$F_{\pi}(q,t)$ is defined as follows. For nonnegative integers $n$
and $m$ let
$$
f(n,m)= \begin{cases} \displaystyle
\prod_{i=0}^{n-1}\frac{1-q^{i}t^{m+1}}{1-q^{i+1}t^{m}},&\;\;\;n\geq1,\\
\;1,&\;\;\;n=0.
\end{cases}
$$
Here $q$ and $t$ are parameters.

Let $\pi\in \mathcal{P}$ and let $(i,j)$ be a box in its support
(where the entries are nonzero). Let $\lambda$, $\mu$ and $\nu$ be
ordinary partitions defined by
\begin{equation}\lb{lmn}
\begin{tabular}{l}
$\lambda=(\pi(i,j),\pi(i+1,j+1),\dots)$,\\
$\mu=(\pi(i+1,j),\pi(i+2,j+1),\dots)$,\\
$\nu=(\pi(i,j+1),\pi(i+1,j+2),\dots).$\\
\end{tabular}
\end{equation}
To the box $(i,j)$ of $\pi$ we associate
$$
F_\pi(i,j)(q,t)=\prod_{m=0}^{\infty}\frac{f(\lambda_1-\mu_{m+1},m)f(\lambda_1-\nu_{m+1},m)}
{f(\lambda_1-\lambda_{m+1},m)f(\lambda_1-\lambda_{m+2},m)}.
$$
Only finitely many terms in this product are different than 1.

To a plane partition $\pi$ we associate a function $F_\pi(q,t)$
defined by
\begin{equation}
F_\pi(q,t)=\prod_{(i,j)\in \pi}F_\pi(i,j)(q,t).
\end{equation}
For the example above
$$
F_\pi(0,0)(q,t)=\frac{1-q}{1-t}\cdot\frac{1-q^3t^2}{1-q^2t^3}\cdot\frac{1-q^5t^4}{1-q^4t^5}\cdot\frac{1-q^3t^5}{1-q^4t^4}.
$$

Two main results of our paper are
\begin{tA}
(Generalized MacMahon's formula; Macdonald's case)
$$
\sum_{\pi \in
\mathcal{P}(r,c)}F_\pi(q,t)s^{|\pi|}=\prod_{i=1}^{r}\prod_{j=1}^c
\frac{(ts^{i+j-1};q)_\infty}{(s^{i+j-1};q)_{\infty}},
$$
In particular,
$$
\sum_{\pi \in \mathcal{P}}F_\pi(q,t)s^{|\pi|}=\prod_{n=1}^{\infty}
\left[\frac{(ts^{n};q)_\infty}{(s^{n};q)_{\infty}}\right]^n.
$$
\end{tA}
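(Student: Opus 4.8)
The approach I would take is to recognize $F_\pi(q,t)$ as the weight coming from Macdonald processes. The plane partition is encoded as an interlacing sequence of partitions $\lambda^{-r+1} \prec \cdots \prec \lambda^0 \succ \cdots \succ \lambda^{c-1}$ (reading diagonals), and one assigns to each interlacing pair $\mu \prec \lambda$ a factor built from the skew Macdonald polynomials $P_{\lambda/\mu}$ and $Q_{\lambda/\mu}$ specialized appropriately. Concretely, I would write
$$
\sum_{\pi}F_\pi(q,t)s^{|\pi|} = \sum_{\{\lambda^k\}}\prod_{k<0}Q_{\lambda^{k}/\lambda^{k+1}}(a_k)\prod_{k\geq 0}P_{\lambda^{k}/\lambda^{k+1}}(b_k)
$$
for suitable single-variable (geometric) specializations $a_k,b_k$ depending on $s$, and then collapse the sum using the Macdonald-Cauchy identity
$$
\sum_{\lambda}P_\lambda(x)Q_\lambda(y)=\prod_{i,j}\frac{(txy_{i}x_{j};q)_\infty}{(xy_ix_j;q)_\infty}
$$
iteratively, exactly as in the Okounkov--Reshetikhin treatment of the Schur case. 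The right-hand side product $\prod_{i,j}(ts^{i+j-1};q)_\infty/(s^{i+j-1};q)_\infty$ is precisely what this telescoping produces.

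The key steps, in order: (1) Set up the diagonal-slice description of plane partitions and fix the geometric specializations so that the $s^{|\pi|}$ weight is reproduced — each diagonal $\lambda^k$ should be weighted by $s^{|\lambda^k|}$, which forces the specialization parameters to be appropriate powers of $s$. (2) Prove the crucial combinatorial identity: the product over boxes $\prod_{(i,j)\in\pi}F_\pi(i,j)(q,t)$ equals the product over interlacing edges of the ratio $\psi_{\lambda/\mu}(q,t)$ or $\varphi_{\lambda/\mu}(q,t)$ (the Macdonald skew-function coefficients for a single-variable specialization, as in Macdonald's book Chapter VI). This is where the explicit function $f(n,m)$ must be matched against Macdonald's formulas for $b_\lambda$, $\psi'_{\lambda/\mu}$, etc. (3) Apply the skew Cauchy identity repeatedly to sum over $\lambda^0$ first, then the neighboring diagonals, peeling off one factor $\prod(ts^{i+j-1};q)_\infty/(s^{i+j-1};q)_\infty$ at each stage; check convergence of the infinite products (valid for $|s|<1$). (4) Specialize to the $r,c\to\infty$ case to get the stated corollary, regrouping the double product by $n=i+j-1$.

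The main obstacle is step (2): showing that the locally-defined box weight $F_\pi(i,j)(q,t)$, which is written as an infinite product of ratios of the elementary function $f$, reorganizes globally into the product of Macdonald interlacing coefficients along the diagonal slices. This requires a careful bookkeeping argument — one has to fix a diagonal, express the three partitions $\lambda,\mu,\nu$ attached to a box in terms of consecutive diagonal slices, and verify a telescoping/factorization identity among the $f(n,m)$'s so that the per-box contributions multiply to $\prod_k \psi_{\lambda^k/\lambda^{k+1}}$ up to the correct normalization $b_{\lambda^k}$. I expect this to hinge on the identity $f(n,m)f(n',m+?) = \cdots$ encoding how Macdonald's $\psi,\varphi$ factor through rows, together with Macdonald's formula $\psi_{\lambda/\mu} = \prod b(\cdots)/b(\cdots)$ for one-variable specializations; once that matching is in place, the rest is a direct application of known Cauchy-type identities.
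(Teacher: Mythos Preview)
Your proposal is correct and follows essentially the same route as the paper: encode $\pi$ by its diagonal slices, assign to each interlacing pair the single-variable Macdonald coefficient (the paper packages this as $\Phi_\pi(q,t)=b_{\lambda^0}^{-1}\prod_n\varphi_{[\lambda^{n-1}/\lambda^n]}$), and sum via the skew Macdonald--Cauchy identity to obtain the product side. The one point where the paper differs from what you sketch is your ``main obstacle'' step (2): rather than a direct telescoping reorganization of the box-wise factors $F_\pi(i,j)$ into the edge-wise factors $\varphi_{\lambda/\mu}$, the paper proves $F_\pi=\Phi_\pi$ by induction on the number of boxes in the support, removing the last box $x$ of the last row and verifying case-by-case (three configurations depending on whether $x$ lies on the main diagonal and how it sits relative to its left and upper neighbors) that $F_{\pi'}/F_\pi=\Phi_{\pi'}/\Phi_\pi$ using Macdonald's explicit formulas for $b_\lambda$ and $\varphi_{\lambda/\mu}$.
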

\begin{tB}(Generalized MacMahon's formula; Hall-Littlewood's case)
\begin{equation*}
\sum_{\pi \in \mathcal{P}(r,c)}A_\pi(t)s^{|\pi|}=
\prod_{i=1}^{r}\prod_{j=1}^{c} \frac{1-ts^{i+j-1}}{1-s^{i+j-1}}.
\end{equation*}
In particular,
\begin{equation*}
\sum_{\pi \in \mathcal{P}}A_\pi(t)s^{|\pi|}= \prod_{n=1}^{\infty}
\left(\frac{1-ts^n}{1-s^{n}}\right)^n.
\end{equation*}
\end{tB}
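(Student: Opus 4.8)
My plan is to obtain Theorem B as the $q=0$ specialization of Theorem A, since $F_\pi(0,t)=A_\pi(t)$ (as noted in the introduction) and $(ts^{i+j-1};0)_\infty=1-ts^{i+j-1}$; so the substantive task is Theorem A. (Equivalently, one could run the argument below inside the Hall--Littlewood specialization from the start, where all the formulas simplify.) For Theorem A I would use the standard transfer-matrix method: express the generating function as a vacuum matrix element of a product of half-vertex operators acting on the ring of symmetric functions, and evaluate it through their commutation relations. Concretely, the steps are: (i) encode a plane partition by its diagonal slices, (ii) read the resulting interlacing chain through half-vertex operators built from single-variable skew Macdonald polynomials, and (iii) verify a purely combinatorial identity that rewrites the weight produced in (ii) --- naturally organized along diagonals --- as the product $\prod_{(i,j)\in\pi}F_\pi(i,j)(q,t)$ over boxes.

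For (i), I would write $\pi\in\mathcal{P}(r,c)$ as its sequence of diagonal slices $\lambda^{(k)}$, $1-r\le k\le c-1$, extended by $\emptyset$ outside that range; these form an interlacing chain $\emptyset\prec\cdots\prec\lambda^{(-1)}\prec\lambda^{(0)}\succ\lambda^{(1)}\succ\cdots\succ\emptyset$, where $\mu\prec\lambda$ means $\lambda/\mu$ is a horizontal strip, and $|\pi|=\sum_k|\lambda^{(k)}|$. For (ii), the Macdonald input is that for a horizontal strip $\lambda/\mu$ the single-variable skew functions factor as $P_{\lambda/\mu}(x)=\psi_{\lambda/\mu}\,x^{|\lambda/\mu|}$ and $Q_{\lambda/\mu}(x)=\varphi_{\lambda/\mu}\,x^{|\lambda/\mu|}$, with $\psi_{\lambda/\mu},\varphi_{\lambda/\mu}$ given by Macdonald's explicit products, over the columns of the skew shape, of factors $\tfrac{1-q^{a}t^{b}}{1-q^{a'}t^{b'}}$ --- exactly the shape of the $f(n,m)$ in the statement; and the engine is the single-variable skew Cauchy identity
\[
\sum_{\lambda}Q_{\lambda/\mu}(x)\,P_{\lambda/\nu}(y)=\frac{(txy;q)_\infty}{(xy;q)_\infty}\sum_{\tau}P_{\mu/\tau}(y)\,Q_{\nu/\tau}(x).
\]

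Next I would introduce half-vertex operators $\Gamma_\pm$ on a completion of the ring of symmetric functions whose matrix entries in the $P$- and $Q$-bases are the single-variable skew functions above, so that the skew Cauchy identity becomes the commutation relation $\Gamma_+(x)\Gamma_-(y)=\frac{(txy;q)_\infty}{(xy;q)_\infty}\,\Gamma_-(y)\Gamma_+(x)$, with $\Gamma_+(x)\,|\emptyset\rangle=|\emptyset\rangle$ and $\langle\emptyset|\,\Gamma_-(x)=\langle\emptyset|$. Running the interlacing chain of (i) through these operators (ascending steps for one type, descending steps for the other) yields
\[
\sum_{\pi\in\mathcal{P}(r,c)}\widetilde W(\pi)\,s^{|\pi|}=\langle\emptyset|\,\Gamma_+(s^{1/2})\cdots\Gamma_+(s^{r-1/2})\,\Gamma_-(s^{1/2})\cdots\Gamma_-(s^{c-1/2})\,|\emptyset\rangle,
\]
where $\widetilde W(\pi)$ is the product of the $\psi$'s and $\varphi$'s attached to the interlacing steps and a routine telescoping of $s$-exponents produces the factor $s^{|\pi|}$; commuting every $\Gamma_+$ rightward past every $\Gamma_-$ then collapses the right-hand side to $\prod_{i=1}^{r}\prod_{j=1}^{c}\frac{(ts^{i+j-1};q)_\infty}{(s^{i+j-1};q)_\infty}$, since $(i-\tfrac12)+(j-\tfrac12)=i+j-1$, and the statement for $\mathcal{P}$ follows by letting $r,c\to\infty$.

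The one genuinely non-formal step is (iii): proving $\widetilde W(\pi)=\prod_{(i,j)\in\pi}F_\pi(i,j)(q,t)$. The product $\widetilde W(\pi)$ is indexed by diagonal steps and, within each step, by the columns of the intervening skew shape; I would re-sum it cell by cell, using that a box $(i,j)$ meets only the three diagonals $\lambda,\mu,\nu$ of (\ref{lmn}), where $\mu$ and $\nu$ are the diagonals through $(i+1,j)$ and $(i,j+1)$, i.e.\ the two neighbors of $\lambda$ in the interlacing chain. The arm/leg exponents entering Macdonald's $\psi,\varphi$ for two consecutive slices should telescope so that, after cancellation, the factor pinned to $(i,j)$ becomes exactly $\prod_{m\ge0}\frac{f(\lambda_1-\mu_{m+1},m)\,f(\lambda_1-\nu_{m+1},m)}{f(\lambda_1-\lambda_{m+1},m)\,f(\lambda_1-\lambda_{m+2},m)}$; in particular the two $f$'s in the denominator should arise as the ``self-interaction'' of $\lambda$ with its own shifts by one and by two. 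I expect this bookkeeping --- matching the two sides cell by cell, presumably by induction on the diagonals or on connected components, adjoining one slice at a time --- to be the main obstacle, and I would use the $q=0$ shadow of the identity, where it degenerates to the statement that $A_\pi(t)=\prod_i(1-t^i)^{n_i}$ merely records the border-component data, as the consistency check to steer the computation.
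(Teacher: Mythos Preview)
Your plan is essentially the paper's own proof: the half-vertex-operator formalism is just a repackaging of the paper's weight functions $W(\lambda,\mu;q,t)$ built from skew Macdonald $P$'s and $Q$'s evaluated at single-variable specializations (Section~\ref{s2.1.1}--\ref{s2.1.2}), and your commutation argument is its Proposition~\ref{Z}. Two places where the paper's execution sharpens your sketch: for step~(iii) the paper names the diagonal-indexed weight explicitly as $\Phi_\pi(q,t)=b_{\lambda^0}^{-1}\prod_n\varphi_{[\lambda^{n-1}/\lambda^n]}$ and proves $F_\pi=\Phi_\pi$ by induction that removes the \emph{last box} of the support (case-splitting on its position), rather than slice-by-slice; and the identity $F_\pi(0,t)=A_\pi(t)$ is not a free input from the introduction but a separate lemma (Proposition~\ref{HLcasePol}), proved by showing that the contribution of each level-$h$ border component to $F_\pi(0,t)$ is exactly $1-t^h$.
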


Clearly, the second formulas (with summation over $\mathcal{P}$) are
limiting cases of the first ones as $r,c \to \infty$.

%
%

The proof of Theorem A was inspired by \cite{OR} and \cite{V}. It
uses a special class of symmetric functions called skew Macdonald
functions. For each $\pi \in \mathcal{P}$ we introduce a weight
function depending on several specializations  of the algebra of
symmetric functions. For a suitable choice of these
specializations the weight functions become $F_\pi(q,t)$.

We first prove Theorem A and Theorem B is obtained as a corollary of
Theorem A after we show that $F_\pi(0,t)=A_\pi(t)$.

Proofs of formula (\ref{ShiftedMacMahonac}) appeared in \cite{FW}
and \cite{V}. Both these proofs rely on skew Schur functions  and a
Fock space corresponding to strict plane partitions. In this paper
we also give a bijective proof of (\ref{ShiftedMacMahonac}) that
does not involve symmetric functions.

The paper is organized as follows. Section 2 consists of two
subsections. In Subsection \ref{s2.1} we prove Theorem A. In
Subsection \ref{s2.2} we prove Theorem B by showing that
$F_\pi(0,t)=A_\pi(t)$. In Section \ref{MM} we give a bijective proof
of (\ref{ShiftedMacMahonac}).

\begin{acknowledgement}
This work is a part of my doctoral dissertation at California
Institute of Technology and I thank my advisor Alexei Borodin for
all his help.
\end{acknowledgement}
\section{Generalized MacMahon's formula} \lb{s2}
\subsection{Macdonald's case} \lb{s2.1}
We recall a definition of a plane partition. For basics, such as
ordinary partitions and Young diagrams see Chapter 1 of \cite{Mac}.

A plane partition $\pi$ can be viewed in different ways. One way is
to fix a Young diagram, the support of the plane partition, and then
to associate a positive integer to each box in the diagram such that
integers form nonincreasing rows and columns. Thus, a plane
partition is a diagram with row and column nonincreasing integers.
It can also be viewed as a finite two-sided sequence of ordinary
partitions, since each diagonal in the support diagram represents a
partition. We write $ \pi=(\ldots,
\lambda^{-1},\lambda^{0},\lambda^{1}, \ldots ),$ where the partition
$\lambda^{0}$ corresponds to the main diagonal and $\lambda^{k}$
corresponds to the diagonal that is shifted by $k$, see Figure
\ref{PlanePartition}.
Every such two-sided sequence of
partitions represents a plane partition if and only if
\begin{equation}\lb{condpp}
\begin{array}{c}
\cdots \subset \lambda^{-1} \subset \lambda^{0} \supset \lambda^{1}
\supset \cdots  \text{ and}
\medskip
\\
\text{$[\lambda^{n-1}/\lambda^n]$ is a horizontal strip for every
$n$,}
\end{array}
\end{equation}
where
$$
[\lambda/\mu]=
\begin{cases}
\lambda/\mu& \text{if } \lambda \supset \mu,\\
\mu/\lambda& \text{if } \mu \supset \lambda.
\end{cases}
$$
The weight of $\pi$, denoted with $|\pi|$, is the sum of all entries
of $\pi$.

We denote the set of all plane partitions with $\mathcal{P}$ and its
subset containing all plane partitions with at most $r$ nonzero rows
and $c$ nonzero columns with $\mathcal{P}(r,c)$. Similarly, we
denote the set of all ordinary partitions (Young diagrams) with
$\mathcal{Y}$ and those with at most $r$ parts with
$\mathcal{Y}(r)=\mathcal{P}(r,1).$

We use the definitions of $f(n,m)$ and $F_\pi(q,t)$ from the
Introduction. To a plane partition $\pi$ we associate a rational
function $F_\pi(q,t)$ that is related to Macdonald symmetric
functions (for reference see Chapter 6 of \cite{Mac}).


In this section we prove Theorem A.
The proof consists of few steps. We first define weight functions on
sequences of ordinary partitions (Section \ref{s2.1.1}). These
weight functions are defined using Macdonald symmetric functions.
Second, for suitably chosen specializations of these symmetric
functions we obtain that the weight functions vanish for every
sequence of partitions except if the sequence corresponds to a plane
partition (Section \ref{s2.1.2}). Finally, we show that for $\pi \in
\mathcal{P}$ the weight function of $\pi$ is equal to
$F_\pi(q,t)$(Section \ref{s2.1.3}).

Before showing these steps we first comment on a corollary of
Theorem A.

Fix $c=1$. Then, Theorem A gives a generating formula for ordinary
partitions since $\mathcal{P}(r,1)=\mathcal{Y}(r)$. For
$\lambda=(\lambda_1,\lambda_2,\dots)\in \mathcal{Y}(r)$ we define
$d_i=\lambda_i-\lambda_{i+1}$, $i=1,\dots,r$.  Then
$$
F_\lambda(q,t)=\prod_{i=1}^{r}f(d_i,0)=\prod_{i=1}^{r}\prod_{j=1}^{d_i}\frac{1-tq^{j-1}}{1-q^j}.
$$
Note that $F_{\lambda}(q,t)$ depends only on the set of distinct
parts of $\lambda$.
\begin{corollary}\lb{ObicneParticije}
$$
\sum_{\lambda \in
\mathcal{Y}(r)}F_\lambda(q,t)s^{|\lambda|}=\prod_{i=1}^{r}
\frac{(ts^{i};q)_\infty}{(s^{i};q)_{\infty}}.
$$
In particular,
$$
\sum_{\lambda \in
\mathcal{Y}}F_\lambda(q,t)s^{|\lambda|}=\prod_{i=1}^{\infty}
\frac{(ts^{i};q)_\infty}{(s^{i};q)_{\infty}}.
$$
\end{corollary}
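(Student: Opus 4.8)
The plan is to derive Corollary~\ref{ObicneParticije} as the special case $c=1$ of Theorem~A, rather than proving it from scratch. Since $\mathcal{P}(r,1)=\mathcal{Y}(r)$, the left-hand side of Theorem~A with $c=1$ is exactly $\sum_{\lambda\in\mathcal{Y}(r)}F_\lambda(q,t)s^{|\lambda|}$, and the right-hand side collapses to $\prod_{i=1}^r(ts^i;q)_\infty/(s^i;q)_\infty$. So the only content to verify is that, for a one-column plane partition (i.e.\ an ordinary partition $\lambda$ viewed as the sequence of diagonal partitions of a plane partition with a single column), the general weight $F_\pi(q,t)$ of Theorem~A reduces to the product $\prod_{i=1}^r f(d_i,0)$ claimed in the text, with $d_i=\lambda_i-\lambda_{i+1}$.

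The first step is bookkeeping on the boxes. When $c=1$ the support of $\pi$ consists of the boxes $(i,1)$ for $i=1,\dots,r$ (with $\lambda_i\ge 1$), and for the box $(i,1)$ the three partitions $\lambda,\mu,\nu$ from~\eqref{lmn} are: $\lambda=(\lambda_i,\lambda_{i+1},\dots)$ from the main diagonal through $(i,1)$, $\mu=(\lambda_{i+1},\lambda_{i+2},\dots)$ from the diagonal starting at $(i+1,1)$, and $\nu=\varnothing$ since $(i,2)$ is outside the column. Plugging these into the definition of $F_\pi(i,j)(q,t)$: the factor $f(\lambda_1-\nu_{m+1},m)=f(\lambda_i,m)$ appears in the numerator, the factors $f(\lambda_1-\lambda_{m+1},m)$ and $f(\lambda_1-\lambda_{m+2},m)$ in the denominator are $f(\lambda_i-\lambda_{i+m},m)$ and $f(\lambda_i-\lambda_{i+m+1},m)$, and the remaining numerator factor is $f(\lambda_i-\lambda_{i+m+1},m)$ (from $\mu_{m+1}=\lambda_{i+m+1}$). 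Thus for each box the product over $m$ telescopes: the $f(\lambda_i-\lambda_{i+m+1},m)$ in numerator and denominator cancel across the running index, leaving $F_\pi(i,1)(q,t)=\prod_{m\ge 0} f(\lambda_i,m)/f(\lambda_i-\lambda_{i+m},m)$ — and one checks, using $\lambda_i-\lambda_{i+m}\to$ a fixed value and $f(n,m)$ having only finitely many factors $\ne 1$, that this itself reorganizes so that $\prod_{i}F_\pi(i,1)$ collapses to $\prod_{i=1}^r f(d_i,0)$. (The cleanest way to see the final collapse is to note $F_\pi(q,t)$ depends only on the distinct parts of $\lambda$, as the text observes, so one may compute it on a staircase and match.)

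The second step is to rewrite $f(d_i,0)$ explicitly: from the definition, $f(n,0)=\prod_{i=0}^{n-1}(1-q^it)/(1-q^{i+1})=\prod_{j=1}^{n}(1-tq^{j-1})/(1-q^j)$, which is exactly the product displayed just before the corollary. Then the $r\to\infty$ statement follows by letting $r\to\infty$ in the finite identity, noting that every partition lies in some $\mathcal{Y}(r)$ and that the infinite product on the right converges as a formal power series in $s$ (the $i$th factor contributes only at order $\ge i$). The main obstacle is purely the telescoping/cancellation computation in the first step — making sure the infinite product over $m$ is handled correctly and that cross-box cancellation is organized without sign or indexing errors; there is no conceptual difficulty beyond Theorem~A itself.
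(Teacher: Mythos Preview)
Your overall route --- specialize Theorem~A to $c=1$ --- is legitimate and is indeed how the paper motivates the corollary. But the paper's actual proof is different and self-contained: it does \emph{not} invoke Theorem~A. Instead it shows directly that the coefficient of $s^d$ in $(ts;q)_\infty/(s;q)_\infty$ equals $f(d,0)$ (via the functional equation obtained from $(ts;q)_\infty/(s;q)_\infty=\frac{1-ts}{1-s}\cdot(tsq;q)_\infty/(sq;q)_\infty$), and then uses the bijection $\lambda\leftrightarrow(d_1,\dots,d_r)$, $d_i=\lambda_i-\lambda_{i+1}$, together with $|\lambda|=\sum_i i d_i$, to expand $\prod_{i=1}^r(ts^i;q)_\infty/(s^i;q)_\infty$ as $\sum_\lambda F_\lambda(q,t)s^{|\lambda|}$. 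So the paper's argument is elementary and independent of the machinery behind Theorem~A, whereas yours relies on that machinery.

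More seriously, your computation of the diagonal partitions at each box is wrong. In a one-column plane partition with $\pi(i,1)=\lambda_i$, the diagonal through $(i,1)$ passes through $(i,1),(i+1,2),(i+2,3),\dots$, and since column~2 is empty all entries after the first vanish. Thus in~\eqref{lmn} one has $\lambda=(\lambda_i)$ (a single part), $\mu=(\lambda_{i+1})$, and $\nu=\emptyset$ --- \emph{not} $\lambda=(\lambda_i,\lambda_{i+1},\dots)$ as you wrote. With the correct partitions only the $m=0$ factor in the product defining $F_\pi(i,1)$ is nontrivial, and it gives
\[
F_\pi(i,1)(q,t)=\frac{f(\lambda_i-\lambda_{i+1},0)\,f(\lambda_i,0)}{f(0,0)\,f(\lambda_i,0)}=f(d_i,0)
\]
immediately. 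No telescoping or cross-box cancellation is needed; the hand-waved reorganization you describe (``one checks\dots this itself reorganizes'') is both unnecessary and based on the wrong input.
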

This corollary is easy to show directly.
\begin{proof}
First, we expand $(ts;q)_\infty/(s;q)_\infty$ into the power series in
$s$. Let $a_d(q,t)$ be the coefficient of $s^d$. Observe that
$$
\frac{(ts;q)_\infty}{(s;q)_\infty}:=\sum_{d=0}^{\infty}a_d(q,t)s^d=\frac{1-ts}{1-s}\sum_{d=0}^{\infty}a_d(q,t)s^dq^d.
$$
This implies that
$$
a_d(q,t)=f(d,0).
$$

Every $\lambda=(\lambda_1,\dots,\lambda_r)\in \mathcal{Y}(r)$ is
uniquely determined by $d_i\in \bbN \cup\{0\}$, $i=1,\dots,r$, where
$d_i=\lambda_i-\lambda_{i+1}$. Then $\lambda_i=\sum_{j\geq0}
d_{i+j}$ and $|\lambda|=\sum_{i=1}^{r}id_i$. Therefore,
\begin{eqnarray*}
\prod_{i=1}^{r}
\frac{(ts^{i};q)_\infty}{(s^{i};q)_{\infty}}&=&\prod_{i=1}^{r}\sum_{d_i=0}^{\infty}a_{d_i}(q,t)s^{id_i}\\
&=&\sum_{d_1,\dots,d_r}\left[\prod_{i=1}^{r}a_{d_i}(q,t)\right]\cdot
\left[s^{\sum_{i=1}^rid_i}\right]=\sum_{\lambda \in
\mathcal{Y}(r)}F_{\lambda}(q,t)s^{|\lambda|}.
\end{eqnarray*}
\end{proof}

\subsubsection{\bf{The weight functions}} \lb{s2.1.1}
The weight function is defined as a product of Macdonald symmetric
functions $P$ and $Q$. We follow the notation of Chapter 6 of
\cite{Mac}.

Let $\Lambda$ be the algebra of symmetric functions. A
specialization of $\Lambda$ is an algebra homomorphism $\Lambda \to
\bbC$. If $\rho$ and $\sigma$ are specializations of $\Lambda$ then
we write $P_{\lambda/\mu}(\rho;q,t)$, $Q_{\lambda/\mu}(\rho;q,t)$
and $\Pi(\rho,\sigma;q,t)$ for the images of
$P_{\lambda/\mu}(x;q,t)$, $Q_{\lambda/\mu}(x;q,t)$ and
$\Pi(x,y;q,t)$ under $\rho$, respectively $\rho \otimes \sigma$.
Every map $\rho:(x_1,x_2,\dots)\to (a_1,a_2,\dots)$ where $a_i\in
\bbC$ and only finitely many $a_i$'s
are nonzero defines a specialization. 

Let $\rho=(\rho_0^{+},\rho_1^{-},\rho_1^{+}, \ldots, \rho_T^{-})$ be
a finite sequence of specializations. For two sequences of
partitions $ \lambda=(\lambda^{1},\lambda^{2}, \ldots ,\lambda^{T})$
and $ \mu=(\mu^{1},\mu^{2}, \ldots ,\mu^{T-1})$ we set the weight
function $W(\lambda, \mu;q,t)$ to be
$$
W(\lambda, \mu;q,t)=\prod_{n=1}^{T}Q_{\lambda^{n}/\mu^{n-1}}
(\rho_{n-1}^{+};q,t)P_{\lambda^{n}/\mu^{n}}({\rho_n^{-}};q,t),
$$
where $\mu^0=\mu^T=\emptyset$.
Note that $W(\lambda , \mu;q,t)=0$ unless
$$\emptyset \subset \lambda^{1} \supset \mu^{1} \subset \lambda^{2} \supset \mu^{2} \subset \ldots \supset \mu^{T-1} \subset
\lambda^{T} \supset \emptyset.$$

Recall that ((6.2.5) and (6.4.13) of \cite{Mac})
$$
\Pi(x,y;q,t)=\sum_{\lambda \in
\mathcal{Y}}Q_\lambda(x;q,t)P_\lambda(y;q,t)=\prod_{i,j}\frac{(tx_iy_j;q)_\infty}{(x_iy_j;q)_\infty}.
$$

\begin{proposition}\lb{Z}
The sum of the weights $W(\lambda, \mu;q,t)$ over all sequences of
partitions $ \lambda=(\lambda^{1},\lambda^{2}, \ldots ,\lambda^{T})$
and $ \mu=(\mu^{1},\mu^{2}, \ldots ,\mu^{T-1})$ is equal to
\begin{equation} \lb{FormulaForZ}
Z(\rho;q,t)=\prod_{0 \leq i < j \leq T}\Pi(\rho_i^{+},\rho_j^{-};q,t).
\end{equation}
\end{proposition}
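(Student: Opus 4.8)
The plan is to evaluate the sum
$$
Z(\rho;q,t)=\sum_{\lambda,\mu} W(\lambda,\mu;q,t)
=\sum_{\lambda,\mu}\prod_{n=1}^{T}Q_{\lambda^{n}/\mu^{n-1}}(\rho_{n-1}^{+};q,t)\,P_{\lambda^{n}/\mu^{n}}(\rho_n^{-};q,t)
$$
by summing out the partitions one at a time, using the two fundamental identities for skew Macdonald functions. The first is the Cauchy-type identity for skew functions,
$$
\sum_{\eta}Q_{\eta/\lambda}(x;q,t)\,P_{\eta/\mu}(y;q,t)
=\Pi(x,y;q,t)\sum_{\tau}Q_{\mu/\tau}(x;q,t)\,P_{\lambda/\tau}(y;q,t),
$$
and the specialization that recovers $\Pi$ when one of the partitions is empty, i.e. $\sum_\eta Q_\eta(x)P_\eta(y)=\Pi(x,y)$ together with $P_{\eta/\emptyset}=P_\eta$, $Q_{\eta/\emptyset}=Q_\eta$. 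Both are available in Chapter~6 of \cite{Mac}; the $\Pi$-product formula is quoted in the excerpt.

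First I would sum over $\lambda^{1}$, which appears only in the factors $Q_{\lambda^{1}/\emptyset}(\rho_0^{+})P_{\lambda^{1}/\mu^{1}}(\rho_1^{-})$. By the skew-Cauchy identity (with $\lambda=\emptyset$), this produces $\Pi(\rho_0^{+},\rho_1^{-};q,t)$ times a sum over an auxiliary partition that can be absorbed into $\mu^{1}$; the net effect is that $\rho_0^{+}$ has been ``transported past'' $\rho_1^{-}$, contributing one factor $\Pi(\rho_0^{+},\rho_1^{-})$ and leaving a structurally similar sum with $\rho_0^{+}$ now paired against $\mu^{1}$. Iterating this bookkeeping, each specialization $\rho_i^{+}$ gets pushed to the right past every $\rho_j^{-}$ with $j>i$, each crossing generating a factor $\Pi(\rho_i^{+},\rho_j^{-};q,t)$, and after all partitions are summed away nothing remains but the product $\prod_{0\le i<j\le T}\Pi(\rho_i^{+},\rho_j^{-};q,t)$, which is exactly \eqref{FormulaForZ}. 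Cleanly, one would phrase this as an induction on $T$: split off the last factor $P_{\lambda^{T}/\mu^{T-1}}(\rho_T^{-})$ — wait, rather, induct by removing the pair $(\rho_{T-1}^{+},\rho_T^{-})$ or, more symmetrically, prove the statement by induction on the number of $+$-specializations, using the skew-Cauchy identity once per step.

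The routine-but-delicate part is the bookkeeping of the auxiliary summation variables: the skew-Cauchy identity does not simply eliminate a partition, it replaces the sum over $\lambda^{1}$ by a sum over a new partition $\tau$ with $Q_{\mu^{1}/\tau}P_{\emptyset/\tau}$, and $P_{\emptyset/\tau}$ forces $\tau=\emptyset$ — so in fact the first step is clean, but the \emph{next} step, summing over $\lambda^{2}$ against the now-modified weight, genuinely introduces a nontrivial intermediate partition that must be tracked. The main obstacle is therefore setting up the induction hypothesis in the right generality — one wants a statement of the form ``for any fixed partitions $\alpha,\beta$, the sum of $\prod Q_{\lambda^{n}/\mu^{n-1}}P_{\lambda^{n}/\mu^{n}}$ over all interior $\lambda^{n},\mu^{n}$ with prescribed boundary behaviour equals (a product of $\Pi$'s) times a skew function in $\alpha,\beta$'' — so that the skew-Cauchy identity can be applied repeatedly without the expression leaving the class over which one is inducting. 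Once that invariant is correctly formulated, each inductive step is a single application of the skew-Cauchy identity plus the observation that $P_{\emptyset/\tau}=Q_{\emptyset/\tau}=\delta_{\tau,\emptyset}$ closes the recursion at the ends. This is the same transfer-matrix mechanism used in \cite{OR} and \cite{V}, so I would follow that template.
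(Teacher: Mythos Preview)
Your proposal is correct and takes essentially the same approach as the paper: both rely on the skew-Cauchy identity $\sum_\eta Q_{\eta/\mu}(x)P_{\eta/\nu}(y)=\Pi(x,y)\sum_\tau Q_{\nu/\tau}(x)P_{\mu/\tau}(y)$ and induction on $T$. The paper organizes the induction slightly more cleanly than your exploratory discussion: rather than summing out one $\lambda^n$ at a time, it applies the skew-Cauchy identity simultaneously to every $\lambda^n$ (each appears in exactly one pair $Q_{\lambda^n/\mu^{n-1}}P_{\lambda^n/\mu^n}$), which in one stroke extracts the factor $\prod_{i=0}^{T-1}\Pi(\rho_i^{+},\rho_{i+1}^{-})$ and leaves precisely a weight $W(\mu,\tau)$ of the same form with $T$ replaced by $T-1$ --- so no generalized induction hypothesis with boundary partitions is needed.
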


\begin{proof}
We use
\begin{equation*}
\sum_{\lambda \in \mathcal{Y}}Q_{\lambda / \mu}(x)P_{\lambda /
\nu}(y)=\Pi(x,y)\sum_{\tau \in \mathcal{Y}}Q_{\nu / \tau}(x)P_{\mu /
\tau}(y).
\end{equation*}
The proof of this is analogous to the proof of Proposition 5.1 that
appeared in our earlier paper \cite{V}. Also, see Example 26 of I.5
of \cite{Mac}.

We prove (\ref{FormulaForZ}) by induction on $T$. Using the formula
above  we substitute sums over $\lambda^{i}$'s with sums over
$\tau^{i-1}$'s as in the proof of Proposition 2.1 of \cite{BR}. This
gives
$$\prod_{i=0}^{T-1}\Pi({\rho_i^{+}},{\rho_{i+1}^{-}}) \sum_{\mu, \tau}Q_{\mu^{1}} (\rho_0^{+})P_{\mu^{1}/\tau^{1}}
({\rho_2^{-}}) Q_{\mu^{2}/\tau^{1}} (\rho_1^{+}) \ldots
P_{\mu^{T-1}} ({\rho_{T}^{-}}).$$ This is the sum of $W(\mu,\tau)$
with  $ \mu=(\mu^{1}, \ldots ,\mu^{T-1})$ and $ \tau=(\tau^{1},
\ldots ,\tau^{T-2})$. Inductively, we obtain (\ref{FormulaForZ}).
\end{proof}

\subsubsection{\bf{Specializations}} \lb{s2.1.2}

For $\pi=(\dots,\lambda^{-1},\lambda^0,\lambda^{1},\dots) \in
\mathcal{P}$ we define a function $\Phi_{\pi}(q,t)$ by
\begin{equation} \lb{alternation}
\Phi_{\pi}(q,t)=\frac{1}{b_{\lambda^0}(q,t)}\prod_{n=-\infty}^{\infty}\varphi_{[\lambda^{n-1}/\lambda^{n}]}(q,t),
\end{equation}
where $b$ and $\varphi$ are given with (6.6.19) and (6.6.24)(i) on
p.341 of \cite{Mac}. Only finitely many terms in the product are
different than 1 because only finitely many $\lambda^{n}$ are
nonempty partitions.

We show that for a suitably chosen specializations the weight
function vanishes for every sequence of ordinary partitions unless
this sequence represents a plane partition in which case it becomes
(\ref{alternation}). This, together with Proposition \ref{Z},
implies
\begin{proposition}\lb{pomocna1}
$$
\sum_{\pi \in
\mathcal{P}(r,c)}\Phi_\pi(q,t)s^{|\pi|}=\prod_{i=1}^{r}
\prod_{j=1}^{c} \frac{(ts^{i+j-1};q)_\infty}{(s^{i+j-1};q)_\infty}.
$$
\end{proposition}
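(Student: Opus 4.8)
The plan is to choose the specializations $\rho_0^+,\rho_1^-,\dots,\rho_T^-$ in Proposition~\ref{Z} so that the right-hand side $Z(\rho;q,t)$ matches the product in Proposition~\ref{pomocna1}, and so that the weight $W(\lambda,\mu;q,t)$ restricted to sequences that encode plane partitions in $\mathcal P(r,c)$ reduces to $\Phi_\pi(q,t)s^{|\pi|}$. Concretely, a plane partition $\pi=(\dots,\lambda^{-1},\lambda^0,\lambda^1,\dots)\in\mathcal P(r,c)$ has its nonzero diagonals $\lambda^{-c+1},\dots,\lambda^{-1},\lambda^0,\lambda^1,\dots,\lambda^{r-1}$ indexed from $-c+1$ to $r-1$, so I would take $T=r+c-1$ and reindex the two-sided sequence as $(\lambda^1,\dots,\lambda^T)$ with $\lambda^1=\lambda^{-c+1}$, etc. The defining condition \eqref{condpp} says precisely that the $\mu$-partitions in $W$ should be forced to equal the intersections $\lambda^n\cap\lambda^{n+1}$ and that each pair is interlacing with a horizontal strip between them. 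To force horizontal strips I would use single-variable (principal) specializations: set $\rho_{n}^{+}$ to be the one-variable specialization sending $p_1\mapsto x_n$ (equivalently $\rho^+_n:(x_1,x_2,\dots)\mapsto (x_n,0,0,\dots)$) for the "$\supset$" steps, and similarly $\rho_n^-\mapsto$ a single variable $y_n$ for the "$\subset$" steps. For a single variable, $Q_{\lambda/\mu}(x;q,t)$ vanishes unless $\lambda/\mu$ is a horizontal strip, in which case it is a monomial in $x$ times a product of $\varphi$-type factors, and $P_{\lambda/\mu}(y;q,t)$ is likewise a monomial times $\psi$-type factors; this is exactly (6.6.24) of \cite{Mac}. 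Taking $x_n=y_n=s$ for all $n$ makes the monomials combine to $s^{|\pi|}$ (each box of $\pi$ lies on exactly one diagonal and contributes to exactly one skew factor), and the product of all the $\varphi$ and $\psi$ coefficients collapses, via the branching identities for $P$ and $Q$, to the single expression \eqref{alternation} $\Phi_\pi(q,t)$; the factor $1/b_{\lambda^0}(q,t)$ appears because of the asymmetry $Q=b\,P$ at the "turning" diagonal $\lambda^0$.

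The steps, in order: (1) Fix $T=r+c-1$ and the dictionary between sequences $(\lambda,\mu)$ appearing in $W$ and plane partitions in $\mathcal P(r,c)$, noting that $\mathcal P(r,c)$ is exactly the set of such $\pi$ with at most $r$ rows and $c$ columns, i.e. the diagonals outside $[-c+1,r-1]$ are empty. (2) Specialize each $\rho_n^{\pm}$ to the single variable $s$. (3) Invoke the single-variable evaluation of skew Macdonald $P$ and $Q$ functions from \cite{Mac} to see that $W$ vanishes unless all consecutive differences are horizontal strips and each $\mu^n$ is the intersection $\lambda^n\cap\lambda^{n+1}$, i.e. unless $(\lambda,\mu)$ encodes a genuine plane partition. (4) On such configurations, compute the surviving coefficient: the powers of $s$ assemble to $s^{|\pi|}$, and the product of $\varphi_{[\lambda^{n-1}/\lambda^n]}$ and $\psi$ factors telescopes/simplifies to $\Phi_\pi(q,t)$ as in \eqref{alternation}; here I would lean on the standard identity relating the $\psi$ coefficients of $P$ to the $\varphi$ coefficients of $Q$ together with $Q_\lambda=b_\lambda P_\lambda$. (5) Evaluate the right-hand side: $Z(\rho;q,t)=\prod_{0\le i<j\le T}\Pi(\rho_i^+,\rho_j^-;q,t)=\prod_{0\le i<j\le T}\frac{(ts^2;q)_\infty}{(s^2;q)_\infty}$—wait, more precisely with all variables equal to $s$ each $\Pi(\rho_i^+,\rho_j^-;q,t)=\frac{(ts\cdot s;q)_\infty}{(s\cdot s;q)_\infty}$ is constant, so one must instead scale: take the variable for $\rho_i^+$ to be $s^{?}$—I would in fact split the weight $s$ asymmetrically, e.g. assign $\rho_i^+\mapsto s^{\alpha_i}$ and $\rho_j^-\mapsto s^{\beta_j}$ with $\alpha_i+\beta_j$ running over $\{1,2,\dots\}$ appropriately, so that $\Pi(\rho_i^+,\rho_j^-)=\frac{(ts^{\alpha_i+\beta_j};q)_\infty}{(s^{\alpha_i+\beta_j};q)_\infty}$ and the double product over $0\le i<j\le T$ reproduces $\prod_{i=1}^r\prod_{j=1}^c\frac{(ts^{i+j-1};q)_\infty}{(s^{i+j-1};q)_\infty}$. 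Finally combine (4) and (5) with Proposition~\ref{Z} to conclude.

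The main obstacle I expect is Step (4): verifying that the product of single-variable skew-function coefficients over all diagonals really does collapse to the closed form $\Phi_\pi(q,t)$ of \eqref{alternation}, with the correct single $1/b_{\lambda^0}$ factor and no leftover $\psi$ terms. This is a bookkeeping argument with the Macdonald branching coefficients $\varphi,\psi,b$ from \S6.6 of \cite{Mac}, and the delicate point is that the chain of interlacings passes through the unique maximal diagonal $\lambda^0$, where the roles of $P$ and $Q$ (and hence of $\varphi$ vs.\ $\psi$) switch; one has to use $\psi_{\lambda/\mu}=\varphi_{\lambda/\mu}\,b_\mu/b_\lambda$-type relations to fold everything into a single product of $\varphi$'s divided by $b_{\lambda^0}$. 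A secondary subtlety is the precise choice of the exponents $\alpha_i,\beta_j$ so that $\{\alpha_i+\beta_j:0\le i<j\le T\}$ is exactly the multiset $\{\,i+j-1: 1\le i\le r,\ 1\le j\le c\,\}$; taking $\alpha_i$ and $\beta_j$ to be the appropriate "half-integer-shifted" linear functions of the index handles this, and is the same device used in \cite{OR} and \cite{V}.
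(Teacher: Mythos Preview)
Your overall strategy is the same as the paper's: apply Proposition~\ref{Z} with single-variable specializations so that the skew Macdonald factors reduce to monomials times the branching coefficients $\varphi$ and $\psi=\varphi\,b_\mu/b_\lambda$, and then telescope the $b$-factors to produce $1/b_{\lambda^0}$. That part is right, and your worry about Step~(4) is milder than you think: once you write $P_{\lambda/\mu}(\text{one variable})=\varphi_{\lambda/\mu}\,b_\mu/b_\lambda\cdot s^{|\lambda|-|\mu|}$, the $b$'s on the decreasing side telescope immediately to $1/b_{\lambda^0}$, and no separate $\psi$-bookkeeping is needed.

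There is, however, one genuine gap. In Step~(2) you specialize \emph{every} $\rho_n^{\pm}$ to a single variable. This cannot work, for two reasons. First, with all $\rho_n^{\pm}$ nonzero the product $\prod_{0\le i<j\le T}\Pi(\rho_i^+,\rho_j^-)$ has $\binom{T+1}{2}=\binom{r+c}{2}$ nontrivial factors, whereas the target has only $rc$; no choice of exponents $\alpha_i,\beta_j$ can repair a mismatch in the number of factors. Second, and more fundamentally, with all $\rho_n^{\pm}$ nonzero the single-variable vanishing only forces each $\lambda^n/\mu^{n-1}$ and $\lambda^n/\mu^n$ to be a horizontal strip---it does \emph{not} force $\mu^n$ to equal $\lambda^n$ or $\lambda^{n+1}$, so the sum over $(\lambda,\mu)$ is over a strictly larger set than $\mathcal P(r,c)$. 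The fix (which is what the paper does, and what \cite{OR} and \cite{V} do) is to set the ``off-side'' specializations to \emph{zero}: take $\rho_n^-$ to be the zero specialization for $n$ on the increasing side of the peak and $\rho_n^+$ to be zero on the decreasing side. Since $P_{\lambda/\mu}(0)=Q_{\lambda/\mu}(0)=\delta_{\lambda,\mu}$, this forces $\mu^n=\lambda^n$ (left of the peak) and $\mu^n=\lambda^{n+1}$ (right of the peak), collapsing the sum exactly onto $\mathcal P(r,c)$. It also leaves precisely $r$ nonzero $\rho^+$'s and $c$ nonzero $\rho^-$'s, so the partition function has $rc$ factors; with the half-integer exponents $\rho_n^+\mapsto s^{-n-1/2}$ (for $-r\le n\le -1$) and $\rho_n^-\mapsto s^{n+1/2}$ (for $0\le n\le c-1$) one gets $\Pi(\rho_i^+,\rho_j^-)=\dfrac{(ts^{(-i)+j};q)_\infty}{(s^{(-i)+j};q)_\infty}$, which reproduces the claimed double product. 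Once you insert this one missing ingredient, your outline goes through exactly as in the paper.
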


\begin{proof}
If $\rho$ is a specialization of $\Lambda$ where
$x_1=s,\,x_2=x_3=\ldots=0$ then by (6.7.14) and (6.7.14$^\prime$)
of \cite{Mac}
$$
\begin{array}{lcc}
 Q_{\lambda/\mu}(\rho)=
\begin{cases}
\varphi_{\lambda/\mu} s^{|\lambda|-|\mu|} & \text{$\;\;\;\;\;\;\;\;\;\lambda \supset \mu$,  $\lambda/\mu$ is a horizontal strip},\\
0 & \text{$\;\;\;\;\;\;\;\;\;$otherwise},
\end{cases}\\
P_{\lambda/\mu}(\rho)=
\begin{cases}
\varphi_{\lambda/\mu}b_\mu/b_\lambda s^{|\lambda|-|\mu|} & \text{ $\lambda \supset \mu$, $\lambda/\mu$ is a horizontal strip},\\
0 & \text{ otherwise}.
\end{cases}
\end{array}$$

Let
\begin{equation*}
\begin{array}{llll}
\rho_n^+:x_1=s^{-n-1/2},\,x_2=x_3=\ldots=0 &&&-r \leq n \leq -1,\\
\rho_n^-:x_1=x_2=\ldots=0 &&& -r+1 \leq n \leq -1,\\
\rho_n^-:x_1=s^{n+1/2},\,x_2=x_3=\ldots=0 &&&\;\;\,0 \leq n \leq c-1,\\
\rho_n^+:x_1=x_2=\ldots=0 &&&\;\;\,0 \leq n \leq c-2.
\end{array}
\end{equation*}

Then for any two sequences
$\lambda=(\lambda^{-r+1}\dots,\lambda^{-1},\lambda^0,\lambda^1,\dots\lambda^{c-1})$
and $\mu=(\mu^{-r+1}\dots,\mu^{-1},\mu^0,\mu^{1},\dots\mu^{c-2})$
the weight function is given with
\begin{eqnarray*} W(\lambda, \mu)&=&\prod_{n=-r+1}^{c-1} Q_{\lambda^{n}/\mu^{n-1}}
(\rho_{n-1}^{+})P_{\lambda^{n}/\mu^{n}} ({\rho_n^{-}}),
\end{eqnarray*}
where $\mu^{-r}=\mu^{c-1}=\emptyset.$ Then $W(\lambda,\mu)=0$ unless
$$
\mu^{n}=\begin{cases}
\lambda^n&n<0,\\
\lambda^{n+1}&n\geq0,
\end{cases}
$$
$$
\cdots\lambda^{-1} \subset \lambda^0  \supset
\lambda^1\supset\cdots,
$$
$$
\begin{array}{c}
[\lambda^{n-1} /\lambda ^{n}]\text{ is a horizontal strip for every
} n ,
\end{array}
$$
i.e. $\lambda \in \mathcal{P}$ and in that case
\begin{eqnarray*}
W(\lambda, \mu)&=&\prod_{n=-r+1}^{0}\varphi_{\lambda^{n}/\lambda^{n-1}}(q,t)s^{(-2n+1)(|\lambda^{n}|-|\lambda^{n-1}|)/2}\\
&&\cdot\prod_{n=1}^{c}\frac{b_{\lambda^n}(q,t)}{b_{\lambda^{n-1}}(q,t)}\varphi_{\lambda^{n-1}/\lambda^{n}}(q,t)s^{(2n-1)(|\lambda^{n-1}|-|\lambda^{n}|)/2}\\
&=&\frac{1}{b_{\lambda^0}(q,t)}\prod_{n=-r+1}^{c}\varphi_{[\lambda^{n-1}/\lambda^{n}]}(q,t)s^{|\lambda|}=\Phi_{\lambda}(q,t)s^{|\lambda|}.
\end{eqnarray*}

If $\rho^+$ is $x_1=s,\,x_2=x_3=\ldots=0$ and $\rho^-$ is
$x_1=r,\,x_2=x_3=\ldots=0$ then
$$
\Pi(\rho^+,\rho^-)=\prod_{i,\,j} \left. \frac{(tx_iy_j;q)_\infty}{(x_iy_j;q)_\infty}
\right| _{x=\rho^+,\, y=\rho^-}=\frac{(tsr;q)_\infty}{(sr;q)_\infty}.
$$
Then, by Proposition \ref{Z}, for the given specializations of
$\rho_i^+$'s and $\rho_j^-$'s we have
$$
Z=\prod_{i=-1}^{-r} \prod_{j=0}^{c-1}
\Pi(\rho_i^+,\rho_j^-)=\prod_{i=1}^{r} \prod_{j=1}^{c}
\frac{(ts^{i+j-1};q)_\infty}{(s^{i+j-1};q)_\infty}.
$$
\end{proof}


\subsubsection{\bf{Final step}} \lb{s2.1.3}

We show that $F_\pi(q,t)=\Phi_\pi(q,t)$. Then Proposition
\ref{pomocna1} implies Theorem A.

\begin{proposition}
Let $\pi \in \mathcal{P}$. Then
$$
F_{\pi}(q,t)=\Phi_{\pi}(q,t).
$$
\end{proposition}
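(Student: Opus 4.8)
The plan is to show that both sides factor as products over the boxes of $\pi$, and then to match the local factors. On the $\Phi_\pi$ side, the expression $\frac{1}{b_{\lambda^0}(q,t)}\prod_n \varphi_{[\lambda^{n-1}/\lambda^n]}(q,t)$ is written in terms of the diagonal partitions; using the product formulas for $b_\lambda(q,t)$ and $\varphi_{\lambda/\mu}(q,t)$ from (6.6.19) and (6.6.24) of \cite{Mac}, each of these is a product over boxes (or over rows/columns) of the relevant diagrams, with factors built from arm-lengths and leg-lengths. The first step is therefore to rewrite $\Phi_\pi(q,t)$ as a product $\prod_{(i,j)\in\pi} G_\pi(i,j)(q,t)$, where the factor attached to a box $(i,j)$ depends only on the local data $\lambda,\mu,\nu$ defined in (\ref{lmn}) — i.e. the three partitions read off the diagonal through $(i,j)$ and the two neighbouring diagonals. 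Since $F_\pi(q,t)=\prod_{(i,j)\in\pi}F_\pi(i,j)(q,t)$ by definition, it then suffices to prove the identity box-by-box: $F_\pi(i,j)(q,t)=G_\pi(i,j)(q,t)$ for every box.

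The key reduction is that the dependence of $\varphi_{[\lambda^{n-1}/\lambda^n]}$ on a given box comes only through the horizontal strip between two consecutive diagonals, and that the function $\varphi_{\lambda/\mu}$ for a horizontal strip $\lambda/\mu$ has the product expression (Macdonald (6.6.24)(i))
$$
\varphi_{\lambda/\mu}(q,t)=\prod_{s}\frac{b_\lambda(s;q,t)}{b_\mu(s;q,t)},
$$
the product being over columns $s$ that meet $\lambda/\mu$, where $b_\lambda(s;q,t)=\frac{1-q^{a(s)}t^{l(s)+1}}{1-q^{a(s)+1}t^{l(s)}}$. Comparing with the definition
$$
f(n,m)=\prod_{i=0}^{n-1}\frac{1-q^{i}t^{m+1}}{1-q^{i+1}t^{m}},
$$
one sees that $f(n,m)$ is exactly a telescoping product of such $b$-factors along a single row at leg-length $m$ as the arm-length runs from $0$ to $n-1$. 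So the second step is the bookkeeping: express $\varphi_{[\lambda^{n-1}/\lambda^n]}$ and $1/b_{\lambda^0}$ through $f$, keeping careful track of which diagonal contributes to the row through the box $(i,j)$. Concretely, if $(i,j)$ lies on the diagonal whose partition is $\lambda$ (so $\lambda_1=\pi(i,j)$), and $\mu,\nu$ are the partitions on the two adjacent diagonals below and to the right as in (\ref{lmn}), then the terms $f(\lambda_1-\mu_{m+1},m)$ and $f(\lambda_1-\nu_{m+1},m)$ in the numerator come from the two horizontal strips $[\lambda/\mu]$ and $[\nu/\lambda]$ restricted to the first row, while $f(\lambda_1-\lambda_{m+1},m)$ and $f(\lambda_1-\lambda_{m+2},m)$ in the denominator come from the $b_{\lambda^0}$-type normalizations and from the overlap of adjacent box contributions.

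The main obstacle I expect is the combinatorial alignment: Macdonald's $b_\lambda$ and $\varphi_{\lambda/\mu}$ are naturally products over all boxes (or all columns) of a partition, with arm- and leg-lengths measured inside that partition, whereas $F_\pi(i,j)(q,t)$ is organized by the single box $(i,j)$ and indexed by $m$ (which plays the role of a leg-length). Converting one organization into the other requires showing that when one forms the full product $\prod_{(i,j)\in\pi}$, all the "interior" arm-length contributions telescope and only the differences $\lambda_1-\lambda_{k}$, $\lambda_1-\mu_k$, $\lambda_1-\nu_k$ survive, matching the $f$'s. This is essentially a careful induction on the boxes along a diagonal (peeling off the first row of $\lambda^0$ and relating the result to the plane partition with that box removed), together with the observation that $\varphi$ and $\psi$ factor compatibly over horizontal strips. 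I would set up the induction so that removing the corner box of $\lambda^0$ changes $\Phi_\pi$ by exactly the factor $F_\pi(i,j)(q,t)$ with the appropriate $(i,j)$, then invoke the definition of $F_\pi$ to conclude. Once the box-wise identity $F_\pi(i,j)(q,t)=G_\pi(i,j)(q,t)$ is in hand, taking the product over all boxes gives $F_\pi(q,t)=\Phi_\pi(q,t)$, and Proposition \ref{pomocna1} then yields Theorem A.
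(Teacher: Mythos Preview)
Your overall instinct --- prove $F_\pi=\Phi_\pi$ by induction on the number of boxes of the support, comparing how each side changes when a box is removed --- is exactly the paper's approach. But two concrete points in your plan are off and would need repair. First, the box you remove should be a corner of the \emph{support of $\pi$} (the paper takes the last nonzero entry in the last row), not ``the corner box of $\lambda^0$'' or ``the first row of $\lambda^0$''; the latter phrases do not describe an operation that takes a plane partition to a smaller plane partition. Second, and more important, removing the corner box $(i,j)$ does \emph{not} change $F_\pi$ by the single factor $F_\pi(i,j)$. The box $(i,j)$ is the last part $\lambda_k$ of some diagonal partition $\lambda$, and deleting it alters the tails $\lambda,\mu,\nu$ (in the sense of (\ref{lmn})) for \emph{every} box on that diagonal and on the two adjacent ones. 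So both ratios $F_{\pi'}/F_\pi$ and $\Phi_{\pi'}/\Phi_\pi$ are products involving all $k$ boxes along those diagonals. The paper writes $\Phi_{\pi'}/\Phi_\pi=\Phi_L\cdot\Phi_R\cdot\Phi_0$ (from the two changed $\varphi$-factors and possibly $b_{\lambda^0}$), expresses each piece as a product of $f$-ratios indexed by $i=0,\dots,k-1$, and then checks that the same product equals $F_{\pi'}/F_\pi$; this requires a short case analysis depending on whether the removed box is to the left of, on, or past the main diagonal.

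Your alternative plan --- factor $\Phi_\pi$ directly as $\prod_{(i,j)} G_\pi(i,j)$ with $G_\pi(i,j)=F_\pi(i,j)$ --- is more ambitious than what the paper does and is not obviously feasible: $b_{\lambda^0}$ and $\varphi_{\lambda/\mu}$ are products over boxes of the Young diagrams of the \emph{diagonal} partitions (with arm and leg lengths computed there), not over boxes of the support of $\pi$, so there is no evident one-to-one assignment of their factors to boxes $(i,j)\in\pi$. The telescoping you anticipate does happen, but it is exactly what makes the \emph{ratio} under box removal manageable; trying to see the full box-by-box factorization without passing through ratios is the hard way around.
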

\begin{proof}
We show this by induction on the number of boxes in the support of
$\pi$. Denote the last nonzero part in the last row of the support
of $\pi$ by $x$. Let $\lambda$ be a diagonal partition containing it
and let $x$ be its $k$th part. Because of the symmetry with respect
to the transposition we can assume that $\lambda$ is one of diagonal
partitions on the left.

Let $\pi'$ be a plane partition obtained from $\pi$ by removing $x$.
We want to show that $F_{\pi}$ and $F_{\pi'}$ satisfy the same
recurrence relation as $\Phi_{\pi}$ and $\Phi_{\pi'}$. The
verification uses the explicit formulas for $b_\lambda$ and
$\varphi_{\lambda/\mu}$ given by (6.6.19) and (6.6.24)(i) on p.341
of \cite{Mac}.

We divide the problem in several cases depending on the position of
the box containing $x$. Let I, II and III be the cases shown in
Figure \ref{Cases}.
\begin{figure} [htp!]
\centering \includegraphics[height=4cm]{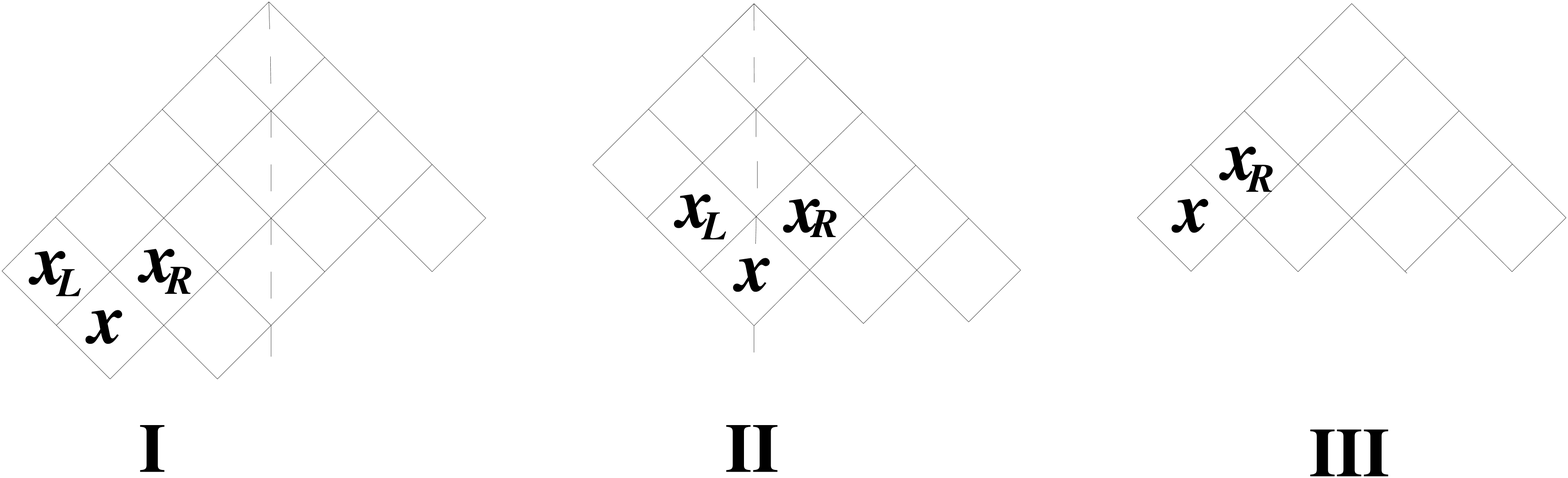}
\caption{Cases I, II and III}
\label{Cases} \end{figure}

Let $\lambda^L$ and $\lambda^R$ be the diagonal partitions of $\pi$
containing $x_L$ and $x_R$, respectively. Let $\lambda'$ be a
partition obtained from $\lambda$ by removing $x$.

If III then $k=1$ and one checks easily that
$$
\frac{F_{\pi'}}{F_{\pi}}=\frac{\Phi_{\pi'}}{\Phi_{\pi}}
=\frac{f(\lambda_1^R,0)}{f(\lambda^R_1-\lambda_1,0)f(\lambda_1,0)}.
$$

Assume I or II. Then
\begin{eqnarray*}
\Phi_{\pi'}=\Phi_{\pi}\cdot
\frac{\varphi_{[\lambda'/\lambda^L]}}{\varphi_{[\lambda/\lambda^L]}}
\cdot\frac{\varphi_{[\lambda'/\lambda^R]}}{\varphi_{[\lambda/\lambda^R]}}
\cdot\frac{b_{\lambda^0(\pi)}}{b_{\lambda^0(\pi')}}=\Phi_{\pi}\cdot \Phi_L
\cdot \Phi_R \cdot \Phi_0.
\end{eqnarray*}
Thus, we need to show that
\begin{equation}\lb{vazi}
\Phi_L \cdot \Phi_R \cdot \Phi_0=F:=\frac{F_{\pi'}}{F_{\pi}}.
\end{equation}

If I then $\lambda^L_{k-1}=x_L$ and $\lambda^R_{k}=x_R$. From the
definition of $\varphi$ we have that
\begin{equation} \label{phiL}
\Phi_L=
{\prod_{i=0}^{k-1}\frac{f(\lambda_{k-i}-\lambda_k,i)}{f(\lambda_{k-i},i)}}
\cdot
{\prod_{i=0}^{k-2}\frac{f(\lambda^L_{k-1-i},i)}{f(\lambda^L_{k-1-i}-\lambda_k,i)}}.
\end{equation}
Similarly,
$$
\Phi_R={\prod_{i=0}^{k-2}\frac{f(\lambda_{k-1-i}-\lambda_k,i)}{f(\lambda_{k-1-i},i)}}
\cdot
{\prod_{i=0}^{k-1}\frac{f(\lambda^R_{k-i},i)}{f(\lambda^R_{k-i}-\lambda_k,i)}}.
$$

If II then $\lambda^L_{k-1}=x_L$ and $\lambda^R_{k-1}=x_R$ and both
$\Phi_L$ and $\Phi_R$ are given with (\ref{phiL}), substituting $L$
with $R$ for $\Phi_R$, while
$$
\Phi_0=\prod_{i=0}^{k-1}\frac{f(\lambda_{k-i},i)}{f(\lambda_{k-i}-\lambda_k,i)}\cdot
{\prod_{i=0}^{k-2}\frac{f(\lambda_{k-1-i}-\lambda_k,i)}{f(\lambda_{k-1-i},i)}}.
$$

From the definition of $F$ one can verify that (\ref{vazi})
holds.
\end{proof}


\subsection{Hall-Littlewood's case} \lb{s2.2}
We analyze the generalized MacMahon's formula in Hall-Littlewood's
case, i.e. when $q=0$, in more detail. Namely, we describe
$F_\pi(0,t)$.

%

We use the definition of $A_\pi(t)$ from the Introduction. In
Proposition \ref{HLcasePol} we show that $F_{\pi}(0,t)=A_{\pi}(t)$.
This, together with Theorem A, implies Theorem B.

Note that the result implies the following simple identities. If
$\lambda\in\mathcal{Y}=\bigcup_{r \geq 1}\mathcal{P}(r,1)$ then
$k(\lambda)$ becomes the number of distinct parts of $\lambda$.
\begin{corollary}
$$
\sum_{\lambda \in
\mathcal{Y}(r)}(1-t)^{k(\lambda)}s^{|\lambda|}=\prod_{i=1}^{r}
\frac{1-ts^i}{1-s^{i}}.
$$
In particular,
$$
\sum_{\lambda \in
\mathcal{Y}}(1-t)^{k(\lambda)}s^{|\lambda|}=\prod_{i=1}^{\infty}
\frac{1-ts^{i}}{1-s^{i}}.
$$
\end{corollary}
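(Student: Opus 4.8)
The plan is to derive this Corollary as the $c=1$ specialization of Theorem B, exactly as Corollary \ref{ObicneParticije} was derived from Theorem A in the analogous situation. First I would observe that for $c=1$ we have $\mathcal{P}(r,1)=\mathcal{Y}(r)$, so Theorem B reads
\[
\sum_{\lambda \in \mathcal{Y}(r)}A_\lambda(t)s^{|\lambda|}=\prod_{i=1}^{r}\prod_{j=1}^{1}\frac{1-ts^{i+j-1}}{1-s^{i+j-1}}=\prod_{i=1}^{r}\frac{1-ts^i}{1-s^i},
\]
which is precisely the right-hand side of the Corollary. The second (infinite) identity then follows by letting $r\to\infty$, just as in the statement of Theorem B itself.

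The only substantive point is to identify $A_\lambda(t)$ for an ordinary partition $\lambda$, viewed as the one-diagonal plane partition with support a single row. I would argue as follows: a plane partition consisting of a single diagonal partition $\lambda$ has connected components indexed by the distinct values taken by the parts of $\lambda$; there are exactly $k(\lambda)$ of them, where $k(\lambda)$ is the number of distinct parts. Moreover, because there is only one diagonal, no box $(i,j)$ in such a component ever has $(i+1,j+1)$ in the same component, so every box has level $1$, and each connected component is a single border component of level $1$. Hence for each component $C$ we get $n_1=1$ and $n_i=0$ for $i\ge2$, so $P_C(t)=1-t$, and therefore
\[
A_\lambda(t)=\prod_{i=1}^{k(\lambda)}P_{C_i}(t)=(1-t)^{k(\lambda)}.
\]
Substituting this into the displayed specialization of Theorem B yields the Corollary.

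I do not anticipate a genuine obstacle here; the content is purely the combinatorial identification of $A_\lambda(t)=(1-t)^{k(\lambda)}$ for a single-diagonal partition, and everything else is a direct quotation of Theorem B at $c=1$ together with the $r\to\infty$ limit. One could alternatively sidestep Theorem B entirely and give the short direct argument along the lines of the proof of Corollary \ref{ObicneParticije}: expand $(1-ts^i)/(1-s^i)=\sum_{d\ge 0} c_d\, s^{id}$ and check that the coefficients $c_d$ satisfy $c_0=1$ and $c_d=1-t$ for $d\ge1$ (from the functional equation $(1-ts)\sum c_d s^d=(1-s)\sum c_d s^d$ shifted), then multiply over $i=1,\dots,r$ using the bijection $\lambda\leftrightarrow(d_1,\dots,d_r)$ with $d_i=\lambda_i-\lambda_{i+1}$ and $|\lambda|=\sum_i i d_i$, noting that the number of $i$ with $d_i\ge 1$ equals $k(\lambda)$. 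Either route is short; I would present the first since it makes the logical dependence on Theorem B transparent, and relegate the identification $A_\lambda(t)=(1-t)^{k(\lambda)}$ to the one-line observation above.
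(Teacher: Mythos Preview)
Your proposal is correct and matches the paper's own treatment: the paper also derives the corollary from Theorem B at $c=1$ via the identification of $A_\lambda(t)$ with $(1-t)^{k(\lambda)}$ (noting that for $\lambda\in\mathcal{Y}(r)=\mathcal{P}(r,1)$ each connected component is a single level-$1$ border strip, so $k(\lambda)$ is the number of distinct parts), and likewise remarks that the direct argument of Corollary~\ref{ObicneParticije} gives an alternative proof. One minor terminological slip: the plane partition corresponding to $\lambda\in\mathcal{Y}(r)$ is a single \emph{column}, not a ``single diagonal,'' but your level computation is unaffected since $(i+1,j+1)$ already lies outside the support.
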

These formulas are easily proved by the argument used in the proof
of Corollary \ref{ObicneParticije}.

We now prove
\begin{proposition} \lb{HLcasePol}Let $\pi\in \mathcal{P}$.
Then
$$
F_{\pi}(0,t)=A_{\pi}(t).
$$
\end{proposition}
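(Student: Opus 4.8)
The plan is to show that the rational function $F_\pi(q,t)$, which by construction is a product over boxes $(i,j)\in\pi$ of ratios of $f$-factors, degenerates at $q=0$ into the purely combinatorial product $A_\pi(t)=\prod_C P_C(t)$. First I would record the elementary fact that
$$
f(n,m)\big|_{q=0}=\begin{cases}1-t^{m+1},& n\geq 1,\\ 1,& n=0,\end{cases}
$$
since the product $\prod_{i=0}^{n-1}\frac{1-q^it^{m+1}}{1-q^{i+1}t^m}$ has, at $q=0$, only the $i=0$ numerator factor $1-t^{m+1}$ surviving in the numerator and only denominator factors $1$ surviving (the $i+1\geq 1$ exponents kill everything else). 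Thus at $q=0$ the only data that matters about each argument of $f$ is whether it is zero or positive, and the value contributed is $1-t^{m+1}$ exactly when it is positive.

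Next I would unwind $F_\pi(i,j)(0,t)$ box by box. With $\lambda,\mu,\nu$ the diagonal partitions of $\pi$ anchored at $(i,j)$, $(i{+}1,j)$, $(i,j{+}1)$ as in \eqref{lmn}, the exponent of the factor $1-t^{m+1}$ coming from box $(i,j)$ equals
$$
[\lambda_1>\mu_{m+1}] + [\lambda_1>\nu_{m+1}] - [\lambda_1>\lambda_{m+1}] - [\lambda_1>\lambda_{m+2}],
$$
summed over $m\ge 0$. Note $\lambda_1=\pi(i,j)$ is the entry of the box itself. The key geometric observation is that this expression, summed over all boxes $(i,j)$ lying in a single connected component $C$ (boxes with a common entry), telescopes: the $\lambda$-terms coming from box $(i,j)$ cancel against the $\mu$- and $\nu$-terms coming from its neighbors $(i{-}1,j{-}1)$ along the same diagonal and from the adjacent diagonals, provided those neighbors carry the same entry (i.e. lie in $C$). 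What fails to cancel is precisely the contribution at the boundary of $C$ — and the level $h(i,j)$ of a box $(i,j)\in C$ is exactly the first $h$ with $\pi(i{+}h,j{+}h)\neq \pi(i,j)$, so the surviving index $m$ for a boundary box is governed by its level. One then checks that assembling the surviving factors over all of $C$ yields exactly $\prod_{i\ge1}(1-t^i)^{n_i}=P_C(t)$, where $n_i$ counts the $i$-level border components of $C$: each $i$-level border component contributes one factor $1-t^i$.

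I would make the telescoping rigorous by induction on the number of boxes in the support of $\pi$, removing the corner box $x$ with the same case analysis (I, II, III) already used in the proof that $F_\pi=\Phi_\pi$; since all the $b$ and $\varphi$ machinery has been dispatched there, here one only needs the $q=0$ specialization of the recurrence, namely to verify that removing $x$ changes $A_\pi(t)$ by the same ratio of $(1-t^{m+1})$-powers that it changes $F_\pi(0,t)$. The combinatorial side of this — understanding how the border-component structure and the levels of the boxes near $x$ change when $x$ is deleted — is where the real work lies and is the step I expect to be the main obstacle: one must track whether deleting $x$ splits a border component, merges two, lowers the level of $x$'s diagonal neighbor, or leaves the level data untouched, and match each of these four behaviors against the corresponding $f$-factor bookkeeping from \eqref{phiL} and its analogues at $q=0$. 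Once the base case (empty $\pi$, both sides equal $1$) and this inductive step are in place, $F_\pi(0,t)=A_\pi(t)$ follows, and combined with Theorem A it gives Theorem B.
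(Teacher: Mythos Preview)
Your opening step --- the specialization $f(n,m)\big|_{q=0}=1-t^{m+1}$ for $n\ge 1$ and $1$ for $n=0$ --- is exactly right, and your indicator-sum expression for the exponent of $1-t^{m+1}$ at a box $(i,j)$ is correct. But from there you make the problem harder than it is. The paper does \emph{not} telescope between boxes, and it does not use induction or the case analysis I/II/III at all.

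The key observation you are missing is that for a fixed box $(i,j)$ at level $h$, the entire product over $m\ge 0$ collapses to the single term $m=h-1$. Indeed, with $\lambda,\mu,\nu$ as in \eqref{lmn}, one has $\lambda_{m+1}=\lambda_{m+2}=\mu_{m+1}=\nu_{m+1}=\lambda_1$ for all $m<h-1$ (so every factor is $f(0,\cdot)=1$), and all four of these are strictly less than $\lambda_1$ for $m>h-1$ (so numerator and denominator are both $(1-t^{m+1})^2$ and cancel). At $m=h-1$ one has $\lambda_h=\lambda_1$, $\lambda_{h+1}<\lambda_1$, while $\mu_h=\lambda_1$ iff $(i+1,j)$ lies in the same border component $B$, and $\nu_h=\lambda_1$ iff $(i,j+1)\in B$. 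Hence
\[
F_\pi(i,j)(0,t)=(1-t^h)^{\,1-\chi_B(i+1,j)-\chi_B(i,j+1)}.
\]
Now sum the exponent over $(i,j)\in B$: if $|B|=n$, then $\sum_{(i,j)\in B}\bigl(\chi_B(i+1,j)+\chi_B(i,j+1)\bigr)=n-1$ since a border strip of $n$ boxes has $n-1$ internal adjacencies, so the product over $B$ is $(1-t^h)^{n-(n-1)}=1-t^h$. Multiplying over all border components gives $A_\pi(t)$ directly.

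Your proposed induction on corner removal would presumably work, but it introduces exactly the case-splitting you yourself flagged as the main obstacle, and it is unnecessary: the collapse to a single $m$ and the closed form $(1-t^h)^{1-c(i,j)}$ make the argument a two-line computation per border component.
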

\begin{proof}
Let $B$ be a $h$-level border component of $\pi$.
Let $F(i,j)=F_\pi(i,j)(0,t)$. It is enough to show that
\begin{equation}\lb{ProdPoBK}
\prod_{(i,j)\in B}F(i,j)=1-t^h.
\end{equation}
Let
\begin{equation*}
c(i,j)=\chi_B(i+1,j)+\chi_B(i,j+1),
\end{equation*}
where $\chi_B$ is the characteristic function of $B$ taking value 1
on the set $B$ and 0 elsewhere. If there are $n$ boxes in $B$ then
\begin{equation}\lb{C}
\sum_{(i,j)\in B}c(i,j)=n-1.
\end{equation}
Let $(i,j)\in B$. We claim that
\begin{equation}\lb{phic}
F(i,j)=(1-t^h)^{1-c(i,j)}.
\end{equation}
Then (\ref{C}) and (\ref{phic}) imply (\ref{ProdPoBK}).

To show (\ref{phic}) we observe that
$$
f(l,m)(0,t)=
\begin{cases}
1&l=0\\
1-t^{m+1}&l\geq1.
\end{cases}
$$
With the same notation as in (\ref{lmn}) we have that $\mu_{m}$,
$\nu_{m}$, $\lambda_{m}$, $\lambda_{m+1}$ are all equal to
$\lambda_1$ for every $m<h$, while for every $m>h$ they are all
different from $\lambda_1$. Then
\begin{eqnarray*}
F(i,j)&=&\prod_{m=0}^{\infty}\frac{f(\lambda_1-\mu_{m+1},m)(0,t)f(\lambda_1-\nu_{m+1},m)(0,t)}
{f(\lambda_1-\lambda_{m+1},m)(0,t)f(\lambda_1-\lambda_{m+2},m)(0,t)}\\
&&=\frac{f(\lambda_1-\mu_{h},h-1)(0,t)f(\lambda_1-\nu_{h},h-1)(0,t)}
{f(\lambda_1-\lambda_{h},h-1)(0,t)f(\lambda_1-\lambda_{h+1},h-1)(0,t)}\\
&&= \frac{(1-t^h)^{1-\chi_{B}(i+1,j)}(1-t^h)^{1-\chi_{B}(i,j+1)}}
{1\cdot (1-t^h)}=(1-t^h)^{1-c(i,j)}.\\
\end{eqnarray*}

\end{proof}
\bigskip

\section{A bijective proof of the shifted MacMahon's formula} \lb{MM}
In this section we are going to give another proof of the shifted
MacMahon's formula (\ref{ShiftedMacMahonac}). More generally, we
prove
\begin{theorem}\lb{genshifMM}
$$
\sum_{\pi \in
\mathcal{SP}(r,c)}2^{k(\pi)}x^{\operatorname{tr}(\pi)}s^{|\pi|}=\prod
_{i=1}^{r}\prod_{j=1}^{c}\frac{1+xs^{i+j-1}}{1-xs^{i+j-1}}.
$$
\end{theorem}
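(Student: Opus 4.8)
The goal is a bijective proof of the refined shifted MacMahon formula
$$
\sum_{\pi\in\mathcal{SP}(r,c)}2^{k(\pi)}x^{\operatorname{tr}(\pi)}s^{|\pi|}=\prod_{i=1}^r\prod_{j=1}^c\frac{1+xs^{i+j-1}}{1-xs^{i+j-1}},
$$
where $\operatorname{tr}(\pi)$ is the trace (the sum, or number, of diagonal boxes — I will take it to be the number of boxes on the main diagonal, i.e. the size of $\lambda^0$; this is what makes the bookkeeping of $x$ natural). The strategy I would follow is the standard one for Cauchy-type identities: encode a strict plane partition as a lattice path configuration / tuple of interlacing ordinary partitions, use the strictness to split each diagonal partition into its ``staircase'' part and a genuine partition, and then match the resulting combinatorial data with the product side term by term. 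The factor $2^{k(\pi)}$ will come from a sign/orientation choice at each connected component, which I would realize as a choice of one of two ``colors'' for each new maximal value introduced along the diagonals; equivalently, via the correspondence between strict plane partitions and pairs of skew plane partitions or between strict plane partitions and ``tilings'' of the half-plane, the $2^{k(\pi)}$ counts the number of ways to resolve certain local moves.

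\textbf{Steps, in order.} First I would set up the diagonal description: $\pi=(\dots,\lambda^{-1},\lambda^0,\lambda^1,\dots)$ with each $\lambda^n$ a strict partition, the interlacing conditions of (\ref{condpp}), and only finitely many $\lambda^n\neq\emptyset$. Second, I would record the key observation that, because each $\lambda^n$ is strict, the data $(\lambda^{n-1},\lambda^n)$ with $[\lambda^{n-1}/\lambda^n]$ a horizontal strip is equivalent — after subtracting the staircase $\delta=(k-1,k-2,\dots,1,0)$ of the appropriate length — to a pair of ordinary partitions related by interlacing in the Schur (not Hall--Littlewood) sense; this is exactly the setup underlying the $q\to$ Schur specialization, and it is where the numerator $1+xs^m$ rather than $1/(1-s^m)$ originates, since each new part contributes a factor counted with multiplicity one but with a binary color. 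Third, I would build the bijection proper: to $\pi\in\mathcal{SP}(r,c)$ associate a sequence of moves along the diagonals from $-r+1$ to $c-1$, each move adding or removing a horizontal strip; collecting the ``added'' strips and the ``removed'' strips separately, and tracking for each box which anti-diagonal $i+j-1=m$ it sits on, one gets a multiset of pairs $(m,\text{color})$. Fourth, I would check that the generating function of these configurations, weighted by $x^{\operatorname{tr}}s^{|\pi|}$ and by $2$ per component, is precisely $\prod_{i,j}(1+xs^{i+j-1})/(1-xs^{i+j-1})$: the $1+xs^m$ factors account for the boxes of $\lambda^0$ on anti-diagonal $m$ (trace boxes, hence the $x$), and the $1/(1-xs^m)=\sum_{k\geq0}(xs^m)^k$ factors account for the remaining strip-boxes, with the two colors at each component absorbed into the sign conventions of the path model. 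Finally, I would verify injectivity and surjectivity of the constructed map, and specialize $x=1$ to recover (\ref{ShiftedMacMahonac}).

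\textbf{Main obstacle.} The delicate point is getting the factor $2^{k(\pi)}$ to emerge \emph{bijectively} rather than by an inclusion–exclusion or sign-reversing argument. Connected components of a strict plane partition are not visible in a single diagonal — a component can span many diagonals — so the ``binary choice per component'' must be made consistently along a whole sweep of the diagonals, and I expect the heart of the proof to be defining a canonical ``birth diagonal'' for each component (the first diagonal on which that value appears as a fresh maximal run) and showing the colorings at births are in bijection with the $2^{k(\pi)}$ orientations. A secondary technical nuisance is handling the boundary cases $r,c$ finite correctly, i.e. ensuring the staircase subtraction does not force parts to go negative and that the product is genuinely over $1\le i\le r$, $1\le j\le c$; this is routine but must be stated carefully. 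Once the component-birth bookkeeping is pinned down, the matching with the product side should be a direct, if slightly lengthy, computation that I would not grind through here.
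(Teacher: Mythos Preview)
Your proposal takes a genuinely different route from the paper, and the very obstacle you flag as ``the delicate point'' is precisely the gap that keeps this from being a proof. You never actually produce the bijection that makes $2^{k(\pi)}$ appear; the ``birth diagonal'' idea is only named, not constructed, and your Step~Four conflates the main diagonal partition $\lambda^0$ (which carries the trace) with the anti-diagonals $i+j-1=m$ (which index the product). Those are orthogonal directions in the array, so the sentence ``the $1+xs^m$ factors account for the boxes of $\lambda^0$ on anti-diagonal $m$'' does not parse. Without a concrete rule assigning two colors per connected component \emph{and} a verification that the resulting colored objects are counted by the product, what you have is a plan for a proof rather than a proof.

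The paper avoids this difficulty entirely by going through shifted RSK. It first builds a bijection $\Pi$ from $\mathcal{SP}(r,c)$ to pairs $(S,T)$ of \emph{unmarked} shifted tableaux of the same shape: slice the 3D diagram of $\pi$ horizontally into ordinary partitions $\mu_1\supset\mu_2\supset\cdots$, take Frobenius coordinates $(p^{(i)}\,|\,q^{(i)})$ of each $\mu_i$, and let the $i$th diagonal of $S$ (resp.\ $T$) be $p^{(i)}$ (resp.\ $q^{(i)}$). One checks $|\pi|=|S|+|T|-|\mathrm{sh}(S)|$, $\operatorname{tr}(\pi)=|\mathrm{sh}(S)|$, and---the crucial identity---$k(\pi)=k(S)+k(T)-\ell(S)$. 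The factor $2^{k(\pi)}$ is then exactly the number of ways to mark $(S,T)$ (primes on border strips, with no marks on the main diagonal of $T$), so the sum becomes a sum over pairs of \emph{marked} shifted tableaux. Finally the shifted Knuth correspondence sends such pairs bijectively to $r\times c$ matrices $A=[a_{ij}]$ over $P\cup\{0\}$ with $|\mathrm{sh}(S)|=\sum|a_{ij}|$ and $|S|+|T|=\sum(i+j)|a_{ij}|$, whence the generating function factors as $\prod_{i,j}\sum_{a\in P\cup\{0\}}x^{|a|}s^{(i+j-1)|a|}=\prod_{i,j}(1+xs^{i+j-1})/(1-xs^{i+j-1})$.

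The comparison: your diagonal-sweep idea is morally the ``transfer matrix'' or vertex-operator picture (which is how the non-bijective proofs in \cite{FW} and \cite{V} go), and making it bijective would require you to invent a mechanism equivalent to marking shifted tableaux. The paper instead imports that mechanism wholesale from shifted RSK, where it is already built in: the $2$ per border strip is the classical Sagan--Worley marking, and the constraint that $T$ is unmarked on its diagonal accounts for the $-\ell(S)$ in the exponent. If you want to salvage your approach, the concrete missing lemma is a bijection between colorings of the connected components of $\pi$ and markings of the pair $(S,T)=\Pi(\pi)$; absent that, the shifted RSK route is both shorter and complete.
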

Here $\mathcal{SP}(r,c)$ is the set of strict plane partitions with
at most $r$ rows and $c$ columns. Trace of $\pi$, denoted with
$\operatorname{tr}(\pi)$, is the sum of diagonal entries of $\pi$.

The proof is mostly independent of the rest of the paper. It is
similar in spirit to the proof of MacMahon's formula given in
Section 7.20 of \cite{S}. It uses two bijections. One correspondence
is between strict plane partitions and pairs of shifted tableaux.
The other one is between  pairs of marked shifted tableaux and
marked matrices and it is obtained by the shifted Knuth's algorithm.

We recall the definitions of a marked tableau and a marked shifted
tableau (see e.g. Chapter 13 of \cite{HH}).

Let P be a totally ordered set
$$
P=\{1<1'<2<2'< \cdots\}.
$$
We distinguish elements in $P$ as marked and unmarked, the former
being the one with a prime. We use $|p|$ for the unmarked number
corresponding to $p \in P$.

A marked (shifted) tableau is a (shifted) Young diagram filled with
row and column nonincreasing elements from $P$ such that any given
unmarked element occurs at most once in each column whereas any
marked element occurs at most once in each row. Examples of a marked
tableau and a marked shifted tableau are given in Figure
\ref{Tableaux}.

\begin{figure} [htp!]
\centering \includegraphics[height=3cm]{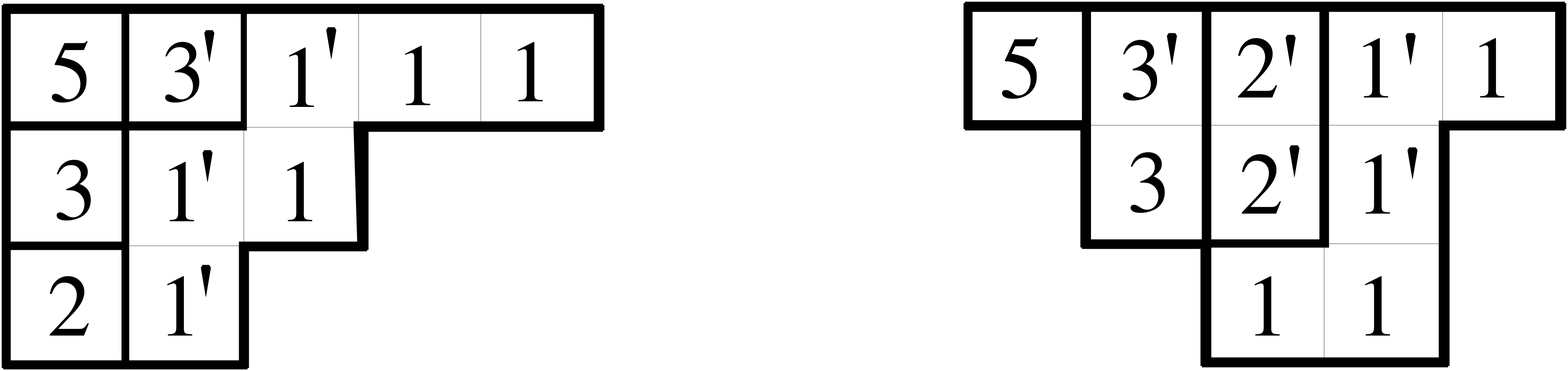}
\caption{ A marked tableau and a marked shifted tableau }
\label{Tableaux} \end{figure}


An unmarked (shifted) tableau is a tableau obtained by deleting
primes from a marked (shifted) tableau. We can also define it as a
(shifted) diagram filled with row and column nonincreasing positive
integers such that no $2 \times 2$ square is filled with the same
number. Unmarked tableaux are strict plane partitions.

We define connected components of a marked or unmarked (shifted)
tableau in a similar way as for plane partitions. Namely, a
connected component is the set of connected boxes filled with $p$ or
$p'$. By the definition of a tableau all connected components are
border strips. Connected components for the examples above are shown
in Figure \ref{Tableaux} (bold lines represent boundaries of these
components).

We use $k(S)$ to denote the number of components of a marked or
unmarked (shifted) tableau $S$. For every marked (shifted) tableau
there is a corresponding unmarked (shifted) tableau obtained by
deleting all the primes. The number of marked (shifted) tableaux
corresponding to the same unmarked (shifted) tableau $S$ is equal to
$2^{k(S)}$ because there are exactly two possible ways to mark each
border component.

For a tableau $S$, we use $\text{sh}(S)$ to denote the shape of $S$
that is an ordinary partition with parts equal to the lengths of
rows of $S$. We define $\ell(S)=\ell(\text{sh}(S))$ and $\max
(S)=|p_{\max}|$, where $p_{\max}$ is the maximal element in $S$. For
both examples $\text{sh}(S)=(5,3,2)$, $\ell(S)=3$ and $\max (S)=5$.

A marked matrix is a matrix with entries from $P \cup \{0\}$.

Let  $\mathcal{ST}^M(r,c)$, respectively $\mathcal{ST}^U(r,c)$, be
the set of ordered pairs $(S,T)$ of marked, respectively unmarked,
shifted tableaux of the same shape where $\max (S)=c$, $\max (T)=r$
and $T$ has no marked letters on its main diagonal. Let
$\mathcal{M}(r,c)$ be the set of $r \times c$ matrices over $P \cup
\{0\}$.


The shifted Knuth's algorithm (see Chapter 13 of \cite{HH}) establishes  the
following correspondence. 
\begin{theorem}\lb{Bij1}
There is a bijective correspondence between matrices $A=[a_{ij}]$
over $P \cup \{0\}$ and ordered pairs $(S,T)$ of marked shifted
tableaux of the same shape such that T has no marked elements on its
main diagonal. The correspondence has the property that $\sum _i
a_{ij}$ is the number of entries $s$ of $S$ for which $|s|=j$ and
$\sum_j a_{ij}$ is the number of entries $t$ of $T$ for which
$|t|=i$.\\
In particular, this correspondence maps  $\mathcal{M}(r,c)$ onto
$\mathcal{ST}^M(r,c)$ and
\begin{eqnarray*}
|\operatorname{sh}(S)|=\sum_{i,j}|a_{ij}|,\;\;\;\;\;
|S|=\sum_{i,j}j|a_{ij}|,\;\;\;\;\; |T|=\sum_{i,j}i|a_{ij}|.
\end{eqnarray*}
\end{theorem}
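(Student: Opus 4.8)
The plan is to establish the bijection via the classical Robinson–Schensted–Knuth correspondence in its shifted/marked incarnation, following the treatment in Chapter 13 of \cite{HH}. The essential point is that the shifted Knuth algorithm inserts the entries of the matrix $A$ one at a time into a growing pair of shifted tableaux, producing the insertion tableau $S$ and the recording tableau $T$, and that this process is reversible. I would first set up the reading order: scan the matrix $A$ column by column (or row by row, with the standard convention), and for the entry $a_{ij}$ produce $|a_{ij}|$ insertion steps, where the value inserted records the column index $j$ (contributing to $S$) and the recording value records the row index $i$ (contributing to $T$); the prime on $a_{ij}$ dictates whether the inserted letter is marked. This bookkeeping immediately yields the weight statements: the number of entries $s$ of $S$ with $|s|=j$ equals $\sum_i a_{ij}$, the number of entries $t$ of $T$ with $|t|=i$ equals $\sum_j a_{ij}$, and hence $|\operatorname{sh}(S)| = \sum_{i,j}|a_{ij}|$, $|S| = \sum_{i,j} j|a_{ij}|$, $|T| = \sum_{i,j} i|a_{ij}|$.

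Next I would verify the two structural requirements. First, that $S$ and $T$ always come out as genuine marked shifted tableaux: this is the content of the shifted insertion (mixed row/column insertion, with the special handling of the main diagonal that can toggle a letter between marked and unmarked). Second, that $T$ has no marked elements on its main diagonal: this follows because the recording tableau grows by adding one box per insertion step at the end of the bumping route, and the convention for shifted RSK is set up precisely so that diagonal boxes of the recording tableau receive unmarked entries. Then I would argue bijectivity by exhibiting the inverse: given $(S,T)$ with $T$ diagonal-unmarked, the largest box of $T$ (in the appropriate order) identifies which insertion step to reverse; reverse-bumping in $S$ recovers the inserted letter and its mark, which together with the position in $T$ reconstructs one unit of one matrix entry $a_{ij}$. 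Iterating reconstructs $A$ uniquely.

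Finally, for the "in particular" clause, I would restrict the reading order so that only column indices $1,\dots,c$ and row indices $1,\dots,r$ occur: this is automatic if $A$ is an $r\times c$ matrix, and then $\max(S) = c$, $\max(T) = r$ (taking, as is standard for these enumerative identities, the convention that the relevant extreme indices are actually attained, or else interpreting $\max$ as a bound), so the correspondence maps $\mathcal{M}(r,c)$ into $\mathcal{ST}^M(r,c)$; surjectivity onto $\mathcal{ST}^M(r,c)$ comes from the inverse map, since a pair of marked shifted tableaux with $\max(S)\le c$, $\max(T)\le r$ reverse-bumps to a matrix supported on the $r\times c$ grid.

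\textbf{Main obstacle.} The delicate part is not the weight-tracking — that is routine once the reading convention is fixed — but the careful verification that shifted (mixed) insertion is well-defined and invertible, including the behavior on the main diagonal where a marked letter can become unmarked and a column-insertion can turn into a row-insertion. Since this is precisely the machinery developed in Chapter 13 of \cite{HH}, I would not reprove it from scratch; rather I would cite it and restrict attention to checking that the diagonal-unmarked condition on $T$ is exactly the condition that makes the restricted map a bijection, and to confirming the three weight identities by the insertion-step accounting described above.
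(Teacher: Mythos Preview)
Your proposal is correct and matches the paper's approach: the paper does not give a self-contained proof but simply invokes the shifted Knuth algorithm of Chapter~13 of \cite{HH}, together with a remark explaining how to adapt the encoding and the BUMP/EQBUMP rules from the nondecreasing convention in \cite{HH} to the nonincreasing convention used here. Your outline is in fact more detailed than what the paper provides, and your flagged ``main obstacle'' (well-definedness and invertibility of mixed shifted insertion, especially at the diagonal) is exactly what the paper delegates to \cite{HH}.
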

\begin{remark}
The shifted Knuth's algorithm described in Chapter 13 of \cite{HH}
establishes a correspondence between marked matrices and pairs of
marked shifted tableaux with row and column {\it nondecreasing}
elements. This algorithm can be adjusted to work for marked shifted
tableaux with row and column {\it nonincreasing} elements.  Namely,
one needs to change the encoding of a matrix over $P \cup \{0\}$ and
two algorithms BUMP and EQBUMP, while INSERT, UNMARK, CELL and {\it
unmix} remain unchanged.

One encodes a matrix $A \in \mathcal{P}(r,c)$ into a two-line
notation $E$ with pairs $\begin{array}{c}i\\j\end{array}$ repeated
$|a_{ij}|$ times, where $i$ is going from $r$ to $1$ and $j$ from
$c$ to $1$. If $a_{ij}$ was marked, then we mark the leftmost $j$ in
the pairs $\begin{array}{c}i\\j\end{array}$. The example from p. 246
of \cite{HH}:
$$
A=\left(
\begin{array}{ccc}
1'&0&2\\
2&1&2'\\
1'&1'&0
\end{array}
\right)
$$
would be encoded as
$$
E=\begin{array}{cccccccccc} 3&3&2&2&2&2&2&1&1&1\\
2'&1'&3'&3&2&1&1&3&3&1'
\end{array}.
$$

Algorithms BUMP and EQBUMP insert $x \in P\cup \{0\}$ into a vector
$v$ over $P\cup \{0\}$. By BUMP (resp. EQBUMP) one inserts $x$ into
$v$ by removing (bumping) the leftmost entry of $V$ that is less
(resp. less or equal) than $x$ and replacing it by $x$ or if there
is no such entry then $x$ is placed at the end of $v$.

For the example from above this adjusted shifted Knuth's algorithm
would give
$$
S=
\begin{array}{ccccc}
3'&3&3&3&1'\\
 &2'&2&1&1\\
 & &1'& &\\
\end{array}
\;\;\; \text{and}\;\;\; T=
\begin{array}{ccccc}
3&3&2'&2&2\\
 &2&2&1&1\\
 & &1& &\\
\end{array}
$$
\end{remark}
The other correspondence between pairs of shifted tableaux of the
same shape and strict plane partitions is described in the following
theorem. It is parallel to the correspondence from Section 7.20 of
\cite{S}.
\begin{theorem}\lb{Bij2}
There is a bijective correspondence $\Pi$ between strict plane
partitions $\pi$  and ordered pairs $(S,T)$ of shifted tableaux of
the same shape. This correspondence maps $\mathcal{SP}(r,c)$ onto
$\mathcal{ST}^U(r,c)$ and if $(S,T)=\Pi(\pi)$ then
$$
|\pi|=|S|+|T|-|\operatorname{sh}(S)|,
$$
$$
\operatorname{tr}(\pi)=|\operatorname{sh}(S)|=|\operatorname{sh}(T)|,
$$
$$
k(\pi)=k(S)+k(T)-l(S).
$$
\end{theorem}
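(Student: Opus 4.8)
The plan is to construct the correspondence $\Pi$ explicitly via a diagonal-slicing (Fomin-growth / RSK-with-shifted-shapes) procedure analogous to the classical one in Section 7.20 of \cite{S}, and then verify the three numerical identities by bookkeeping on that construction. Recall that a strict plane partition $\pi$ is encoded by its two-sided sequence of diagonal partitions $(\dots,\lambda^{-1},\lambda^0,\lambda^1,\dots)$, all strict, satisfying $\cdots\subset\lambda^{-1}\subset\lambda^0\supset\lambda^1\supset\cdots$ with $[\lambda^{n-1}/\lambda^n]$ a horizontal strip for each $n$. First I would split this into the ascending half $\emptyset\subset\cdots\subset\lambda^{-1}\subset\lambda^0$ and the descending half $\lambda^0\supset\lambda^1\supset\cdots\supset\emptyset$; since each consecutive skew shape is a horizontal strip of distinct-part partitions, each half is precisely the data of a shifted tableau. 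Concretely, the descending half gives a shifted tableau $S$ whose shape is $\lambda^0$ (written as a shifted diagram) and whose entries record, in the standard way, at which step $n$ each box is removed; similarly the ascending half gives $T$ with $\operatorname{sh}(T)=\lambda^0$. The condition that $T$ has no marked letters on its main diagonal should fall out because the main-diagonal boxes of $\pi$ are exactly the parts of $\lambda^0$ and they are never removed on the ascending side at a ``split'' moment. Running this backwards (interleaving a pair of shifted tableaux of common shape into a zigzag chain of horizontal strips) gives the inverse map, so $\Pi$ is a bijection; restricting to bounded row/column counts gives $\mathcal{SP}(r,c)\to\mathcal{ST}^U(r,c)$ with $\max(S)=c$, $\max(T)=r$.

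Next I would check the three identities. For the trace: the diagonal entries of $\pi$ are exactly the parts of $\lambda^0$, so $\operatorname{tr}(\pi)=|\lambda^0|=|\operatorname{sh}(S)|=|\operatorname{sh}(T)|$, which is immediate from the construction. For the weight: I would compute $|S|$ as $\sum_n n\cdot(\text{size of the }n\text{-th horizontal strip on the descending side})$, and $|T|$ similarly on the ascending side, and note that each box $(i,j)$ of $\pi$ with value $v=\pi(i,j)$ sitting on diagonal $k$ contributes its value $v$ to $\pi$, while in $S$ and $T$ the entries are the ``time labels'' rather than the values; summing geometric-series style (each box of value $v$ is present in strips $1,\dots,v$ on one side or the other) yields $|\pi|=|S|+|T|-|\operatorname{sh}(S)|$, the subtracted term correcting for the diagonal boxes counted in both halves. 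For the component count: a connected component of $\pi$ is a maximal block of equal entries, which under the slicing breaks into a piece living on the weakly-left diagonals (a border strip in $S$) and a piece on the weakly-right diagonals (a border strip in $T$), the two pieces sharing exactly the main-diagonal boxes of that component; since a connected component of a strict plane partition meets the main diagonal in at most one box, each of the $\ell(S)=\ell(\lambda^0)$ main-diagonal boxes is where one $S$-component and one $T$-component get glued, giving $k(\pi)=k(S)+k(T)-\ell(S)$.

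The main obstacle, I expect, is pinning down the component-counting identity cleanly: one must argue carefully that the ``splitting'' of a connected component of $\pi$ at the main diagonal produces exactly one border strip in $S$ and one in $T$ (never zero, never two), and that distinct components of $\pi$ cannot merge in $S$ or in $T$. This requires using the defining property of shifted tableaux (each unmarked value at most once per column, each marked value at most once per row) to see that the restriction of a connected component to a half-plane of diagonals is genuinely connected and is a single border strip, together with the strictness of $\lambda^0$ to guarantee at most one main-diagonal box per component. The weight identity is routine once the labelling convention is fixed, and the trace identity is essentially automatic; the bijectivity is a standard ``grow/ungrow'' argument parallel to \cite{S}, so the real care goes into the combinatorics of components.
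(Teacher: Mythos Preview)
Your construction of $\Pi$ is correct and, though you phrase it via the diagonal chain $\lambda^0\supset\lambda^1\supset\cdots$, it in fact coincides with the paper's construction via Frobenius coordinates of the horizontal slices $\mu_k=\{\pi\geq k\}$: one checks directly that the arm length $p_i(\mu_{m+1})=(\mu_{m+1})_i-i+1$ equals your time-of-removal label $|\{n\geq 0:\lambda^n_i\geq m+1\}|$ at each box $(i,i+m)$ of the shifted shape $\lambda^0$. Your arguments for bijectivity and for the trace and weight identities are fine and essentially the same as the paper's.

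The gap is in the component identity $k(\pi)=k(S)+k(T)-\ell(S)$. Your sketch asserts that a connected component of $\pi$ ``breaks into a piece \dots\ (a border strip in $S$)'', but this conflates two different labellings: components of $\pi$ are regions of constant \emph{entry value}, whereas components of $S$ are regions of constant \emph{diagonal index} (your ``step $n$''). These do not match up box-for-box; e.g.\ a single diagonal box of $\pi$ with entry $v$ corresponds to an entire row of $v$ boxes of $S$, all labelled $1$. Your splitting argument does correctly give $k(\pi)=(\#\text{right pieces})+(\#\text{left pieces})-\ell(\lambda^0)$, since each component of a strict plane partition is a ribbon meeting the diagonal in at most one box; the missing and nontrivial step is $(\#\text{right pieces})=k(S)$, which does not follow from the construction as you describe it. The paper supplies this via a genuinely different argument on the $3$-dimensional diagram: it identifies $k(S)$ with the number of ``black rhombus components'' on the right-facing part of the surface, and then obtains $k(\pi)=b+g-\ell(\lambda^0)$ by an edge-marking count over the white terraces. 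You would need either that geometric argument or an explicit bijection between right pieces of $\pi$-components and components of $S$; your proposal provides neither.
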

\begin{proof}
%

Every $\lambda \in \mathcal{Y}$ is uniquely represented by Frobenius
coordinates $(p_1,\dots,p_d\,|\,q_1,\dots,q_d)$ where $d$ is the
number of diagonal boxes in the Young diagram of $\lambda$ and $p$'s
and $q$'s correspond to the arm length and the leg length, i.e.
$p_i=\lambda_i-i+1$ and $q_i=\lambda'_i-i+1$, where $\lambda'\in
\mathcal{Y}$ is the transpose of $\lambda$.

Let $\pi\in \mathcal{SP}$. Let $(\mu_1,\mu_2,\dots)$ be a sequence
of ordinary partitions whose diagrams are obtained by horizontal
slicing of the 3-dimensional diagram of $\pi$ (see Figure
\ref{3DDiagram}). The Young diagram of $\mu_1$ corresponds to the
first slice and is the same as the support of $\pi$, $\mu_2$
corresponds to the second slice etc. More precisely, the Young
diagram of $\mu_i$ consists of all boxes of the support of $\pi$
filled with numbers greater or equal to $i$. For example, if
$$
\pi=
\begin{array}{ccccc}
5&3&2&1&1\\
4&3&2&1&\\
3&3&2&&\\
2&2&1&&\\
\end{array},
$$
then $(\mu_1,\mu_2,\mu_3,\mu_4,\mu_5)$ are
$$
\mu_1=
\begin{array}{ccccc}
*&*&*&*&*\\
*&*&*&*&\\
*&*&*&&\\
*&*&*&&\\
\end{array},\;\;
\mu_2=
\begin{array}{ccc}
*&*&*\\
*&*&*\\
*&*&*\\
*&*&\\
\end{array},\;\;
\mu_3=
\begin{array}{cc}
*&*\\
*&*\\
*&*\\
\end{array},\;\;
\mu_4=
\begin{array}{c}
*\\
*\\
\end{array},\;\;
\mu_5=
\begin{array}{c}
*\\
\end{array}.
$$
Let $S$, respectively $T$, be an unmarked shifted tableau whose
$i$th diagonal is equal to $p$, respectively $q$, Frobenius
coordinate of $\mu_i$. For the example above
$$
S=
\begin{array}{ccccc}
5&3&2&1&1\\
 &3&2&1&\\
 & &1&1&\\
\end{array}
\;\;\; \text{and}\;\;\; T=
\begin{array}{ccccc}
4&4&3&2&1\\
 &3&3&2&\\
 & &2&1&\\
\end{array}
$$
It is not hard to check that $\Pi$ is a bijection between pairs of
unmarked shifted tableaux of the same shape and strict plane
partitions.

We only verify that
\begin{equation}\lb{okok}
k(\pi)=k(S)+k(T)-l(S).
\end{equation}
Other properties are straightforward implications of the definition
of $\Pi$.
\begin{figure}[htp!]
\begin{center}
\includegraphics[height=7cm]{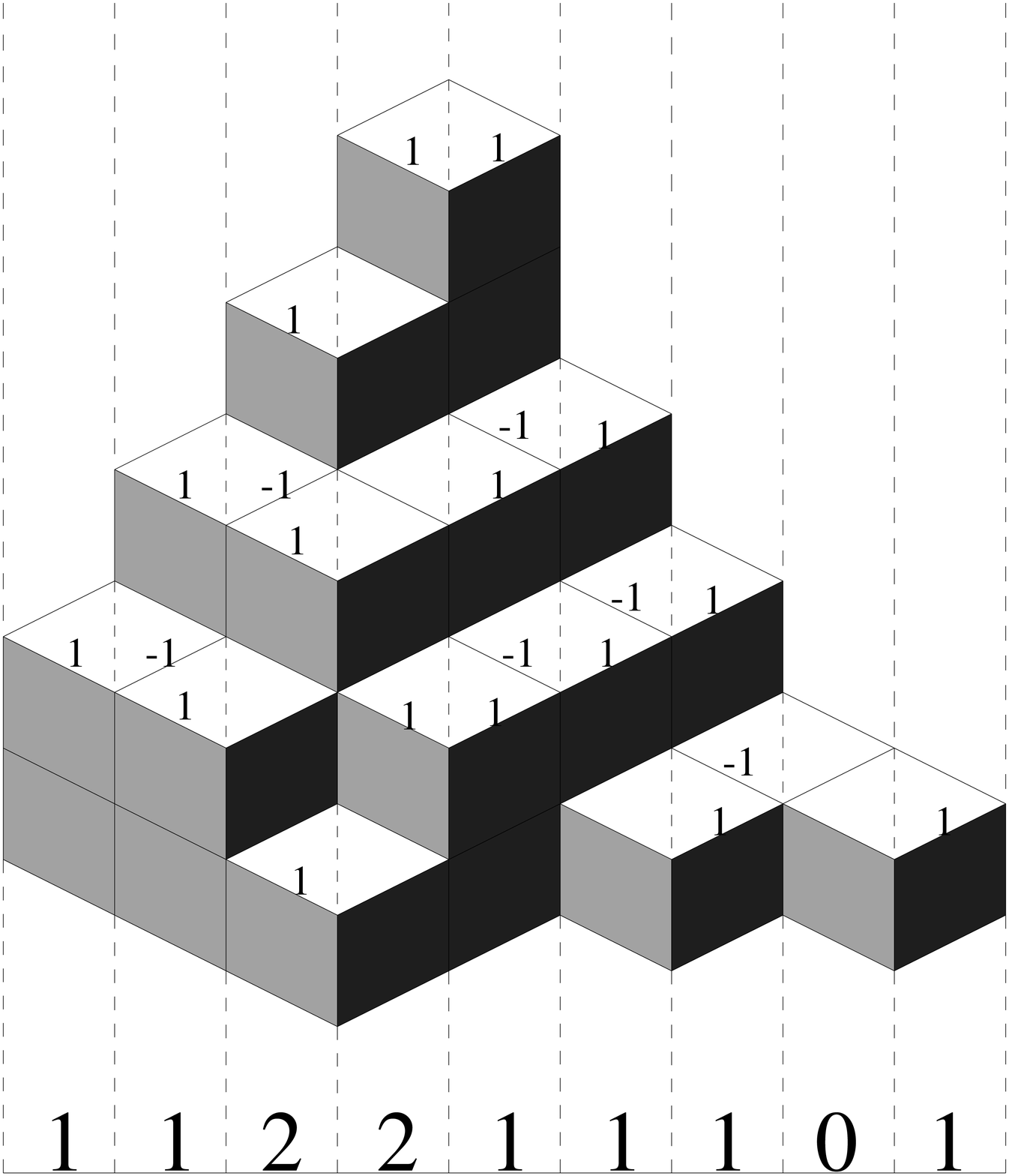}
\end{center}
\caption{3-dimensional diagram of a plane partition}
\label{3DDiagram} \end{figure}

Consider the  3-dimensional diagram of $\pi$ (see Figure
\ref{3DDiagram}) and fix one of its  vertical columns on the right
(with respect to the main diagonal). A rhombus component consists of
all black rhombi that are either directly connected or that have one
white space between them. For the columns on the left we use gray
rhombi instead of black ones. The number at the bottom of each
column in Figure \ref{3DDiagram} is the number of rhombus components
for that column. Let $b$, respectively $g$, be the number of rhombus
components for all right, respectively left, columns.
For the given example $b=4$ and $g=6$.

One can obtain $b$ by a different counting. Consider edges on the right side. Mark all the edges with 0 except the following ones.  Mark a common edge for a white rhombus and a black rhombus where the black rhombus is below the white rhombus with 1. Mark a common edge for two white rhombi that is perpendicular to the plane of black rhombi with -1. See Figure \ref{3DDiagram}. One obtains $b$ by summing these numbers over all edges on the right side of the 3-dimensional diagram.  One recovers $c$ in a similar way by marking edges on the left.

Now, we restrict to a connected component (one of the white terraces, see Figure \ref{3DDiagram}) and sum all the number associated to its edges. If a connected component does not intersect the main diagonal then the sum is equal to 1. Otherwise this sum is equal to 2. This implies that
$$
k(\pi)=b+g-l(\lambda^0).
$$
Since $l(S)=l(\lambda^0)$ it is enough to show that $k(S)=b$ and $k(T)=g$ and (\ref{okok}) follows.

Each black rhombus in the right $i$th column of the 3-dimensional
diagram corresponds to an element of a border strip of $S$ filled
with $i$ and each rhombus component corresponds to a border strip
component. If two adjacent boxes from the same border strip are in
the same row then the corresponding rhombi from the 3-dimensional
diagram are directly connected and if they are in the same column
then there is exactly one white space between them. This implies
$k(S)=b$. Similarly, we get $k(T)=g$.
\end{proof}

Now, using the described correspondences sending $\mathcal{SP}(r,c)$
to $\mathcal{ST}^U(r,c)$ and $\mathcal{ST}^M(r,c)$ to
$\mathcal{M}(r,c)$ we can prove Theorem \ref{genshifMM}.
%
\begin{proof}
\begin{eqnarray*}
\sum_{\pi \in
\mathcal{SP}(r,c)}2^{k(\pi)}x^{\operatorname{tr}(\pi)}s^{|\pi|}&\stackrel{\text{Thm
}\ref{Bij2}}{=}&\sum _{{(S,T) \in
\mathcal{ST}^{{U}}(r,c)}}2^{k(S)+k(T)-l(S)}x^{|\text{sh}S|}s^{|S|+|T|-|\text{sh}S|}\\
&=&\sum
_{{(S,T) \in \mathcal{ST}^{{M}}(r,c)}}x^{|\text{sh}S|}s^{|S|+|T|-|\text{sh}S|}\\
&\stackrel{\text{Thm
}\ref{Bij1}}{=}&\sum_{A \in \mathcal{M}(r,c)}x^{\sum_{i,j}|a_{ij}|}s^{\sum_{i,j} (i+j-1)|a_{ij}|}\\
&=&\prod_{i=1}^r\prod_{j=1}^c \sum_{a_{ij} \in
P\cup{0}}x^{|a_{ij}|}s^{(i+j-1)|a_{ij}|}\\
&=&\prod _{i=1}^{r}\prod_{j=1}^{c}\frac{1+xs^{i+j-1}}{1-xs^{i+j-1}}.
\end{eqnarray*}
\end{proof}
Letting $r \to \infty$ and $c \to \infty$ we get
\begin{corollary}
$$
\sum_{\substack {\pi \in \mathcal{SP}}}
2^{k(\pi)}x^{\text{tr}(\pi)}s^{|\pi|}=\prod_{n=1}^{\infty}\left(\frac{1+xs^{n}}{1-xs^{n}}\right)^n.
$$
\end{corollary}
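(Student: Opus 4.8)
The plan is to deduce the identity directly from Theorem~\ref{genshifMM} by letting $r,c\to\infty$ and checking that both sides stabilize coefficientwise as formal power series in $s$. First I would record that both sides of the claimed identity are well-defined elements of $\bbZ[x][[s]]$. On the left, a strict plane partition $\pi$ with $|\pi|=N$ has support contained in an $N\times N$ square, since a nonzero row of the support contributes at least $1$ to $|\pi|$ (and likewise for columns), so only finitely many $\pi$ contribute to the coefficient of $s^N$, and for each of them $k(\pi)\le N$ and $\operatorname{tr}(\pi)\le N$; thus the coefficient of $s^N$ is a genuine polynomial in $x$. On the right, expanding each factor as $\frac{1+xs^n}{1-xs^n}=1+2xs^n+2x^2s^{2n}+\cdots$ shows the infinite product converges in the $s$-adic topology.

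Next I would fix $N$ and compare coefficients of $s^N$. On the left, any $\pi\in\mathcal{SP}$ with $|\pi|\le N$ already lies in $\mathcal{SP}(r,c)$ as soon as $r,c\ge N$, so for all such $r,c$ the coefficient of $s^N$ in $\sum_{\pi\in\mathcal{SP}(r,c)}2^{k(\pi)}x^{\operatorname{tr}(\pi)}s^{|\pi|}$ equals the coefficient of $s^N$ in $\sum_{\pi\in\mathcal{SP}}2^{k(\pi)}x^{\operatorname{tr}(\pi)}s^{|\pi|}$. On the right, a factor $\frac{1+xs^{i+j-1}}{1-xs^{i+j-1}}$ with $i+j-1>N$ is congruent to $1$ modulo $s^{N+1}$, while for each $n\le N$ there are exactly $n$ pairs $(i,j)$ with $1\le i,j$ and $i+j-1=n$, all of which satisfy $i,j\le n\le N\le\min(r,c)$; hence for $r,c\ge N$
$$
\prod_{i=1}^{r}\prod_{j=1}^{c}\frac{1+xs^{i+j-1}}{1-xs^{i+j-1}}\equiv\prod_{n=1}^{N}\left(\frac{1+xs^{n}}{1-xs^{n}}\right)^{n}\equiv\prod_{n=1}^{\infty}\left(\frac{1+xs^{n}}{1-xs^{n}}\right)^{n}\pmod{s^{N+1}}.
$$
Combining these two observations with Theorem~\ref{genshifMM} shows that the coefficients of $s^N$ on the two sides of the asserted identity agree for every $N$, which is the claim; specializing $x=1$ then recovers the shifted MacMahon's formula (\ref{ShiftedMacMahonac}).

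The only point that requires care — and it is genuinely routine rather than a real obstacle — is this stabilization bookkeeping: that for $r,c\ge N$ no plane partition of weight $\le N$ is lost on passing from $\mathcal{SP}$ to $\mathcal{SP}(r,c)$, and no factor relevant modulo $s^{N+1}$ is lost or gained on the product side. Once this is in place the result is immediate from Theorem~\ref{genshifMM}, and there is no analytic convergence issue to address because the entire argument takes place in the formal power series ring $\bbZ[x][[s]]$.
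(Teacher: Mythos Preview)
Your proposal is correct and follows the same approach as the paper: the paper simply states ``Letting $r\to\infty$ and $c\to\infty$ we get'' the corollary from Theorem~\ref{genshifMM}, without further comment. Your argument supplies the routine coefficientwise stabilization details that the paper leaves implicit, and is otherwise identical in substance.
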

At $x=1$ we recover the shifted MacMahon's formula.

\bigskip


\end{document}